\newcommand{\red}{\color{red}}
\newcommand{\blue}{\color{blue}}
\theoremstyle{plain}
\newtheorem{theorem}{Theorem}[section]
\newtheorem{definition} {Definition}[section]
\newtheorem{lemma} {Lemma}[section]
\newtheorem{remark} {Remark}[section]
\begin{document}

\title{Classification of $4$-dimensional complex Poisson algebras}

\thanks{The second author is supported by the PCI of the UCA `Teor\'\i a de Lie y Teor\'\i a de Espacios de Banach' and by the PAI with project number FQM298.}

\author[H. Abdelwahab]{Hani Abdelwahab}
\address{Hani Abdelwahab.
\newline \indent Mansoura University, Faculty of Science, Department of Mathematics (Egypt).}
\email{{\tt haniamar1985@gmail.com}}

%\author[N. F. Abdo]{Naglaa Fathi Abdo} \address{Naglaa Fathi Abdo. \newline\indent Mansoura University, Faculty of Science, Department of Mathematics (Egypt).} \email{{\tt naglaa\_abdo@std.mans.du.eg}} 

%\author[E. Barreiro]{Elisabete Barreiro}
%\address{Elisabete~Barreiro. \newline \indent University of Coimbra, CMUC, Department of Mathematics,  FCTUC,
%Largo D. Dinis
%3000-143 Coimbra (Portugal).}
%\email{{\tt mefb@mat.uc.pt}}

\author[J.M. Sánchez]{José María Sánchez}
\address{José María Sánchez. \newline \indent University of Cádiz, Department of Mathematics, Puerto Real (Spain).}
\email{{\tt txema.sanchez@uca.es}}

\thispagestyle{empty}

\begin{abstract}
The present paper is devoted to the complete classification of $4$-dimensional complex Poisson algebras, taking into account a classification, up to isomorphism, of the  complex commutative associative algebras of dimension $4$, as well as by using a Lie algebra classification, up to isomorphism.

\bigskip

{\it 2020MSC}: 17A30, 17B63.

{\it Keywords}: algebraic classification, Poisson algebras.
 
\end{abstract}

\maketitle

%%%%%%%%%%%%%%%%%%%%%%%%%%%%%%%%%%%%%%%%%%%%%%%%%%%%%%%%%%%%%%%%%%%%%

\thispagestyle{empty}

\section{Introduction}

Poisson algebras are vector spaces equipped with two binary operations: a commutative product and a Lie bracket, which satisfy a certain compatibility condition known as the Leibniz rule. 
Introduced in Hamiltonian mechanics as the dual of the category of classical
mechanical systems, Poisson algebras have a fundamental role in the study of
quantum groups, differential geometry, noncommutative geometry, integrable
systems, quantum field theory or vertex operator algebras (see \cite 
{crainic, dr, gra2013, Hue, Kon, LPV, vanB}). 
Additionally, this class of algebras plays an important role in many areas of mathematics including symplectic geometry,  representation theory,
quantum field theory and
algebraic geometry. Poisson algebras can be
thought of as the algebraic counterpart of Poisson manifolds which are
smooth manifolds $M$ whose commutative algebra $C^{\infty} (M, \, {\mathbb{R}%
})$ of real smooth functions is endowed with a Lie bracket $[-, -]$
satisfying the Leibniz rule, i.e., $C^{\infty} (M, \, {\mathbb{R}})$ is a
Poisson algebra. Here, they are used to study geometric structures that preserve certain properties under deformation.

Classification algebras is important in several areas of physics and mathematics. Understanding the classification of algebras of small dimensions is often a first step to obtain the corresponding to larger algebras. Concretely, by classifying Poisson algebras we can identify their underlying symmetries and study their geometric properties, which can lead to new insights into the behavior of physical systems.
%Concretely, understanding the classification of algebras of small dimensions is often a first step towards understanding the properties and behavior of these structures in a systematic way.

In \cite{afm} the authors developed a method to obtain the algebraic classification of Poisson algebras defined on a commutative associative algebra, and they applied it to obtain the classification of the $3$-dimensional complex Poisson algebras. In addition, they also study the algebraic classification of the Poisson algebras defined on a commutative associative null-filiform or filiform algebra.
This method  focus on  presenting a procedure to classify all the Poisson algebras associated with a given commutative associative algebra. We briefly explain the method. Pick an   arbitrary commutative associative algebra $ \mathcal{P} $. Further, compute the set $Z^2(\mathcal{P},\mathcal{P})$ of all skew symmetric bilinear maps on $\mathcal{P}$ satisfying some adequate conditions (Definition \ref{ss}).
It is proved that
for any Poisson algebra $(\mathcal{P}, \cdot, [-,-])$ there exists $\theta \in Z^2(
\mathcal{P},\mathcal{P})$ such that the algebra is isomorphic to a Poisson algebra $(\mathcal{P}, \cdot, [-,-]_{\theta})$  associated with the given commutative associative algebra $\mathcal{P}$.
Find the orbits of the automorphisms group ${\rm Aut}(\mathcal{P})$ on $Z^2(\mathcal{P},\mathcal{P})$ by a proper action \eqref{action}. It turns out that, by  choosing a representative $\theta$ from each orbit, it is obtained all Poisson algebras $(\mathcal{P},\cdot,[-,-]_{\theta})$, up to isomorphism. 
So it is obtained all the Poisson algebras associated with a given commutative associative algebra.

In the present paper, the authors produce a  complete classification of $4$-dimensional complex Poisson algebras, by applying the above results, taking into account the  classification, up to isomorphism, of the  complex commutative associative
algebras of dimension $4$ presented in \cite{Burde2013}, as well as, using the Lie algebra  classification, up to isomorphism, given in \cite{Burde99}.

%%%%%%%%%

\section{The algebraic classification method}
\label{sec1}

In this section, we recall the method to obtain the algebraic classification of the Poisson algebras over an arbitrary field $\mathbb{F}$ of characteristic zero present in \cite{afm}, to make this work self-contained.  Let us remind some basic definitions needed in the sequel.

\begin{definition}\rm
A \emph{Poisson algebra} is a vector space $\mathcal{P}$ endowed with two bilinear operations:
\begin{enumerate}
\item An commutative associative multiplication denoted by $-\cdot- :\mathcal{P} \times \mathcal{P} \to \mathcal{P}$;

\item A Lie algebra multiplication denoted by $[-,-] :\mathcal{P}\times   \mathcal{P}\to \mathcal{P}$.
\end{enumerate}
These two operations are compatible in the sense that they satisfy the following Leibniz identity
\begin{equation}
\label{Leibnizidentity}
[x\cdot y, z]=[x,z]\cdot y + x\cdot[y,z],
\end{equation}
for any $x,y,z\in \mathcal{P}$.
The \emph{dimension} of a Poisson algebra is its dimension as vector space.
\end{definition}
\noindent The Condition \eqref{Leibnizidentity} ensures that the Lie bracket behaves like a derivation with respect to the commutative associative multiplication.

\begin{definition}\label{def_homomorphism}\rm
Consider two Poisson algebras $(\mathcal{P}_1,\cdot_1,[\cdot,\cdot]_1)$ and $(\mathcal{P}_2,\cdot_2,[\cdot,\cdot]_2)$. A \emph{Poisson algebras  homomorphism} (or just \emph{homomorphism}, when it is clear the context), is a linear map $\phi :\mathcal{P}_1 \to \mathcal{P}_2$ preserving the two products, that is, $$\phi (x\cdot_1 y) =\phi(x) \cdot_2 \phi(y), \hspace{2cm} \phi([x,y]_1) = [\phi(x),\phi(y)]_2,$$ for all $x,y\in \mathcal{P}_1$. 
\end{definition}

Now, pick any  arbitrary commutative associative algebra, we may consider all the Poisson structures defined over this algebra. This notion is captured in the following definition.

\begin{definition}\label{ss}
Let $(\mathcal{P},\cdot)$ be a commutative associative
algebra. Define $Z^2(\mathcal{P},\mathcal{P})$ to be the set of all skew symmetric bilinear maps $\theta : \mathcal{P} \times 
\mathcal{P} \to \mathcal{P}$ such that:
\begin{eqnarray*}
&& \theta(\theta(x,y),z) + \theta (\theta(y,z),x) + \theta(\theta(z,x),y) =0, \\
&&\theta(x\cdot y,z) - \theta( x,z) \cdot y-x\cdot
\theta(y,z)=0,
\end{eqnarray*}
for all $x,y,z$ in $\mathcal{P}$. Then $Z^2(\mathcal{P},\mathcal{P})\neq \emptyset$ since $\theta=0\in Z^2(\mathcal{P},\mathcal{P})$.
\end{definition}

Observe that, for $\theta \in Z^2(\mathcal{P},\mathcal{P})$, we may define on $\mathcal{P}$ a bracket $[-,-]_{\theta}:\mathcal{P}\times \mathcal{P}\to \mathcal{P}$  by 
\begin{equation}
[x,y]_{\theta} := \theta(x,y),
\label{producttheta}
\end{equation}
for any $x,y$ in $\mathcal{P}$.

\begin{lemma}
Let $(\mathcal{P},\cdot)$ be a commutative associative
algebra and $\theta \in Z^2(\mathcal{P},\mathcal{P})$. Then $( \mathcal{P},\cdot,[-,-]_{\theta})$ is a Poisson algebra endowed with the product defined in \eqref{producttheta}.
\end{lemma}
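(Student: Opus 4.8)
The plan is to verify directly that the triple $(\mathcal{P},\cdot,[-,-]_{\theta})$ satisfies each of the axioms in the definition of a Poisson algebra, reading off each requirement as a translation of one of the two defining conditions of $Z^2(\mathcal{P},\mathcal{P})$ under the substitution \eqref{producttheta}. Since commutativity and associativity of $\cdot$ hold by hypothesis, the work reduces to two tasks: checking that $[-,-]_{\theta}$ is a Lie bracket, and checking the Leibniz identity \eqref{Leibnizidentity}.

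First I would handle the Lie-algebra structure. Bilinearity of $[-,-]_{\theta}$ is immediate from the bilinearity of $\theta$, and skew-symmetry follows from $\theta$ being a skew symmetric bilinear map, so that $[x,y]_{\theta}=\theta(x,y)=-\theta(y,x)=-[y,x]_{\theta}$. For the Jacobi identity I would substitute \eqref{producttheta} into the cyclic sum $[[x,y]_{\theta},z]_{\theta}+[[y,z]_{\theta},x]_{\theta}+[[z,x]_{\theta},y]_{\theta}$ and observe that it equals $\theta(\theta(x,y),z)+\theta(\theta(y,z),x)+\theta(\theta(z,x),y)$, which vanishes by the first condition in Definition \ref{ss}. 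Hence $[-,-]_{\theta}$ is a Lie bracket on $\mathcal{P}$.

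Next I would verify the compatibility condition. Substituting \eqref{producttheta} into both sides of \eqref{Leibnizidentity}, the required equality $[x\cdot y,z]_{\theta}=[x,z]_{\theta}\cdot y+x\cdot[y,z]_{\theta}$ becomes $\theta(x\cdot y,z)=\theta(x,z)\cdot y+x\cdot\theta(y,z)$, which after rearrangement is precisely the second condition in Definition \ref{ss}. This completes the verification that all Poisson-algebra axioms hold.

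I do not anticipate any genuine obstacle: the statement is essentially a dictionary, the two conditions defining $Z^2(\mathcal{P},\mathcal{P})$ having been chosen precisely so that they coincide with the Jacobi and Leibniz identities once one writes $[-,-]_{\theta}=\theta$. The only points requiring mild care are matching the sign and index conventions so that the cyclic sum lines up with the chosen form of the Jacobi identity, and noting that the single Leibniz-type condition, combined with skew-symmetry, already forces the bracket to act as a derivation in both of its arguments (the second derivation property being obtained by applying skew-symmetry to the first).
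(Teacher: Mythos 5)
Your proposal is correct and takes essentially the same approach as the paper: both arguments observe that skew-symmetry of $\theta$ makes $[-,-]_{\theta}$ anticommutative, and then read off the Jacobi identity and the Leibniz identity \eqref{Leibnizidentity} as direct translations of the two defining conditions of $Z^2(\mathcal{P},\mathcal{P})$ under the substitution \eqref{producttheta}. Nothing more is needed.
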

\begin{proof}
Let $(\mathcal{P},\cdot)$ be a commutative associative algebra and let $\theta \in Z^2(\mathcal{P},\mathcal{P})$. Then $(\mathcal{P},[\cdot
,\cdot]_{\theta})$ is an anticommutative algebra. Moreover, since $\theta \in Z^2(\mathcal{P},\mathcal{P})$, we have
\begin{align*}
& [[x,y]_{\theta}   ,z]_{\theta} +[[y,z]_{\theta}   ,x]_{\theta}+
[[z,x]_{\theta}   ,y]_{\theta}=\theta(\theta(x,y),z) + \theta (\theta(y,z),x) + \theta(\theta(z,x),y) =0,\\
& [x\cdot  y,z]_{\theta} -[x,z]_{\theta} \cdot y -x\cdot [y,z]_{\theta} = \theta(x\cdot y,z) - \theta (x,z) \cdot y -x\cdot \theta(y,z)=0,
\end{align*}
for $x,y,z$ in $\mathcal{P}$, as desired.
\end{proof}

In the other way around, we may  proof that
if $(\mathcal{P},\cdot ,[-,-])$ is a Poisson algebra then there exists $\theta \in Z^2(
\mathcal{P},\mathcal{P})$ such that $(\mathcal{P}, \cdot, [-,-]_{\theta}) \cong (\mathcal{P},\cdot,[-,-])$. Indeed, let us consider the skew symmetric bilinear map $\theta : \mathcal{P}\times \mathcal{P} \to \mathcal{P}$ defined by $\theta(x,y) := [x,y]$ for $x,y \in \mathcal{P}$. Then $\theta \in Z^2(
\mathcal{P},\mathcal{P})$ and $(
\mathcal{P},\cdot ,[-,-]_{\theta}) = (\mathcal{P},\cdot,[-,-])$.

Now, let $(\mathcal{P},\cdot)$ be a commutative associative
algebra and ${\rm Aut}(\mathcal{P})$ be the automorphism group of $\mathcal{P}$. Then we can define an action of  ${\rm Aut}(\mathcal{P})$  on $Z^2(\mathcal{P},\mathcal{P})$ by
\begin{equation}
 (\theta *\phi)(x,y) := \phi^{-1}(\theta(\phi(x), \phi(y))),
\label{action}
\end{equation}
for any $\phi \in {\rm Aut}(\mathcal{P})$ and $\theta \in Z^2(\mathcal{P},\mathcal{P})$, with $x,y$ in $\mathcal{P}$.

\begin{lemma}

Let $(\mathcal{P},\cdot)$ be a commutative associative algebra
and $\theta ,\vartheta \in Z^2(\mathcal{P},\mathcal{P})$.
Then $(\mathcal{P},\cdot ,[-,-]_{\theta})$ and $(\mathcal{P},\cdot,[-,-]_{\vartheta})$ are
isomorphic if and only if there exists  $\phi \in {\rm Aut}(\mathcal{P})$ such that $\theta *\phi =\vartheta$.
\end{lemma}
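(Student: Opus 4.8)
The plan is to prove both implications by directly unwinding the definition \eqref{action} of the action and the definition \eqref{producttheta} of the bracket. The entire argument reduces to a single observation: every $\phi \in {\rm Aut}(\mathcal{P})$ intertwines the brackets $[-,-]_{\theta}$ and $[-,-]_{\theta * \phi}$, so that $\phi$ is automatically a Poisson isomorphism between the corresponding Poisson algebras. I would isolate this fact first and then read both directions off it.

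For the implication ($\Leftarrow$), I would record the key fact that for any $\phi \in {\rm Aut}(\mathcal{P})$, the map $\phi$ is a Poisson isomorphism from $(\mathcal{P},\cdot,[-,-]_{\theta*\phi})$ onto $(\mathcal{P},\cdot,[-,-]_{\theta})$. Since $\phi$ lies in ${\rm Aut}(\mathcal{P})$, it already preserves the commutative associative product, so only the bracket needs checking, which is a direct computation from the definitions:
\[
\phi\bigl([x,y]_{\theta*\phi}\bigr) = \phi\bigl((\theta*\phi)(x,y)\bigr) = \phi\bigl(\phi^{-1}(\theta(\phi(x),\phi(y)))\bigr) = \theta(\phi(x),\phi(y)) = [\phi(x),\phi(y)]_{\theta}.
\]
As $\phi$ is a linear bijection preserving both products, it is the desired isomorphism. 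Now if $\theta*\phi=\vartheta$ for some $\phi\in{\rm Aut}(\mathcal{P})$, applying this fact (with $\theta*\phi$ replaced by $\vartheta$) immediately gives $(\mathcal{P},\cdot,[-,-]_{\vartheta})\cong(\mathcal{P},\cdot,[-,-]_{\theta})$.

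For the implication ($\Rightarrow$), I would start from a Poisson isomorphism $\psi:(\mathcal{P},\cdot,[-,-]_{\theta})\to(\mathcal{P},\cdot,[-,-]_{\vartheta})$. Because $\psi$ preserves the commutative associative product and is a linear bijection, it is an automorphism of $(\mathcal{P},\cdot)$, and hence so is $\phi:=\psi^{-1}\in{\rm Aut}(\mathcal{P})$, using that ${\rm Aut}(\mathcal{P})$ is closed under inversion. Preservation of the bracket by $\psi$ reads $\psi(\theta(x,y))=\vartheta(\psi(x),\psi(y))$ for all $x,y\in\mathcal{P}$. Substituting $x=\psi^{-1}(u)$ and $y=\psi^{-1}(v)$ turns this into $\psi\bigl(\theta(\psi^{-1}(u),\psi^{-1}(v))\bigr)=\vartheta(u,v)$, and since $(\theta*\phi)(u,v)=\phi^{-1}(\theta(\phi(u),\phi(v)))=\psi\bigl(\theta(\psi^{-1}(u),\psi^{-1}(v))\bigr)$, I obtain $\theta*\phi=\vartheta$, as required.

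I do not expect a genuine obstacle here, as the result is essentially a bookkeeping verification. The only points demanding care are getting the direction of the automorphism right — namely choosing $\phi=\psi^{-1}$ rather than $\psi$ in the forward direction — together with the two structural remarks that a Poisson isomorphism restricts to an automorphism of the underlying commutative associative algebra and that automorphism groups are closed under taking inverses. Once these are in place, the identities above close the argument.
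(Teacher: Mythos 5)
Your proof is correct and follows essentially the same route as the paper: both directions are obtained by directly unwinding the definitions of $[-,-]_{\theta}$ and of the action $\theta * \phi$. The only cosmetic difference is orientation — the paper takes the isomorphism from $(\mathcal{P},\cdot,[-,-]_{\vartheta})$ to $(\mathcal{P},\cdot,[-,-]_{\theta})$ so that the same map $\phi$ satisfies $\theta*\phi=\vartheta$, while you orient it the other way and pass to $\phi=\psi^{-1}$, which is equally valid.
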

\begin{proof}
If there exists  $\phi \in {\rm Aut}(\mathcal{P})$ such that  $\theta *\phi =\vartheta$ then $\phi : (\mathcal{P},\cdot,[-,-]_{\vartheta}) \to $ $(\mathcal{P},\cdot,[-,-]_{\theta})$ is an isomorphism since $\phi(\vartheta(x,y) =\theta(\phi(x),\phi(y))$, with $x,y$ in $\mathcal{P}$. On the other hand, if $\phi : (\mathcal{P},\cdot,[-,-]_{\vartheta}) \to $ $(\mathcal{P},\cdot,[-,-]_{\theta})$ is an isomorphism of Poisson algebras, then $\phi \in {\rm Aut}(\mathcal{P})$ and $\phi([x,y]_{\vartheta}) = [\phi(x),\phi(y)]_{\theta}$, with $x,y$ in $\mathcal{P}$. Hence $\vartheta(x,y) =\phi^{-1}(\theta(\phi(x),\phi(y)))= (\theta * \phi)(x,y)$ and therefore $\theta *\phi = \vartheta$.
\end{proof}

Hence, we have a procedure to classify all the Poisson algebras associated to a given commutative associative algebra $(\mathcal{P},\cdot)$. It is performed in three steps:

\begin{enumerate}
\item Compute $Z^2(\mathcal{P},\mathcal{P})$.
\item Find the orbits of ${\rm Aut}(\mathcal{P})$ on $Z^2(\mathcal{P},\mathcal{P})$.
\item Choose a representative $\theta$ from each orbit and then construct the Poisson algebra $(\mathcal{P},\cdot,[-,-]_{\theta})$.
\end{enumerate}

\begin{remark}
Similarly, we can construct an analogous method for classifying the $4$-dimensional Poisson algebras from the
classification of Lie algebras of dimension four. 
\end{remark}

Let us denote the following notation. Let $\{e_1,e_2,\dots, e_n\}$ be a fixed basis of a commutative associative algebra $(\mathcal{P},\cdot)$. We define $\Lambda^2(\mathcal{P},\mathbb{F}) := {\rm span}_{\mathbb F}\{\Delta_{i,j} : 1\leq i<j\leq n\}$, where each $\Delta _{i,j}:\mathcal{P}\times \mathcal{P} \to \mathbb{F}$ is the skew-symmetric bilinear form defined by
\[
\Delta_{i,j}(e_l,e_m) :=\left\{ 
\begin{tabular}{ll}
$1,$ & if $(i,j) =(l,m),$ \\ 
$-1,$ & if $(i,j) =(m,l), $ \\ 
$0,$ & otherwise.
\end{tabular}
\right. 
\]
Now, if $\theta \in Z^2(\mathcal{P},\mathcal{P})$, then $\theta$ can be uniquely written as $\theta(x,y) = \sum_{i=1}^n B_i(x,y)e_i$ where $B_1,B_2,\dots,B_n$ is a sequence of skew symmetric bilinear forms on $\mathcal{P}$. Also, we may write $\theta = (B_1,B_2,\dots, B_n)$ . Let $\phi^{-1} \in {\rm Aut}(\mathcal{P})$ be given by the matrix $(b_{ij})$. If $(\theta *\phi)(x,y) = \sum_{i=1}^n B_i'(x,y)e_i$ then $B_i' = \sum_{j=1}^n b_{ij} \phi^t B_j\phi$.

\begin{remark}
Note that if $(\mathcal{P}_1,\cdot_1,[\cdot,\cdot]_1)$ and $(\mathcal{P}_2,\cdot_2,[\cdot,\cdot]_2)$ are two isomorphic Poisson algebras, then the commutative associative algebras $(\mathcal{P}_1,\cdot_1)$ and $(\mathcal{P}_2,\cdot_2)$ are isomorphic. So, given two non-isomorphic commutative associative algebras $\mathcal{P} $ and $\mathcal{P'}$, we have that any Poisson structure on $\mathcal{P}$ is
not isomorphic to any Poisson structure on $\mathcal{P'}$.
\end{remark}
 
\begin{remark}
\label{[1 0]}Let $X=
\begin{pmatrix} \alpha & \beta \end{pmatrix} \in {\mathcal{M}}_{1\times 2}({\mathbb{F}})$ and $X\neq 0$. Then there exists an invertible matrix $A\in {\mathcal{M}}_{2\times 2}({\mathbb{F}})$
such that $XA= \begin{pmatrix} 1 & 0 \end{pmatrix}$. In fact,   first assume that $\alpha \neq 0$. Then $\begin{pmatrix} \alpha & \beta
\end{pmatrix}
\begin{pmatrix} \alpha ^{-1} & -\beta \\ 0 & \alpha \end{pmatrix} = \begin{pmatrix} 1 & 0 \end{pmatrix}$.  Now, if  $\alpha =0$  then $\begin{pmatrix} 0 & \beta \end{pmatrix}
\begin{pmatrix} 0 & 1 \\ \beta^{-1} & 0 \end{pmatrix} =\allowbreak 
\begin{pmatrix} 1 & 0 \end{pmatrix}$.
\end{remark}

\section{Poisson algebras of dimension $4$}
\label{3y4}

From now, we present the classification, up to isomorphism, of the Poisson algebras for dimension $4$ over the field of complex numbers $\mathbb{C}$. 
For simplicity, every time we write the multiplication table of a Poisson algebra the products of basic elements whose values are zero or can be recovered by the commutativity, in the case of $-\cdot-$, or by the anticommutativity, in the case of $[-,-]$, are omitted.
First we recall the  classification, up to isomorphism, of the  complex commutative associative
algebras of dimension $4$ presented in \cite{Burde2013}:

\begin{theorem}
\cite{Burde2013} Let $\mathcal{A}$ be a complex commutative associative
algebra of dimension $4$. Then $\mathcal{A}$ is isomorphic to one of the
following algebras:

\begin{itemize}
\item $\mathcal{A}_{01}:$ trivial algebra.

\item $\mathcal{A}_{02}:e_{1}^{2}=e_{2}.$

\item $\mathcal{A}_{03}:e_{1}^{2}=e_{3},e_{2}^{2}=e_{3}.$

\item $\mathcal{A}_{04}:e_{1}^{2}=e_{2},e_{1}\cdot e_{2}=e_{3}.$

\item $\mathcal{A}_{05}:e_{1}^{2}=-e_{3},e_{1}\cdot
e_{2}=e_{4},e_{2}^{2}=e_{3}.$

\item $\mathcal{A}_{06}:e_{1}\cdot e_{2}=e_{4},e_{2}^{2}=e_{3}.$

\item $\mathcal{A}_{07}:e_{1}^{2}=e_{4},e_{2}^{2}=e_{4},e_{3}^{2}=e_{4}.$

\item $\mathcal{A}_{08}:e_{1}^{2}=e_{2},e_{1}\cdot
e_{2}=e_{4},e_{3}^{2}=e_{4}.$

\item $\mathcal{A}_{09}:e_{1}^{2}=e_{2},e_{1}\cdot e_{2}=e_{3},e_{1}\cdot
e_{3}=e_{4},e_{2}^{2}=e_{4}.$

\item $\mathcal{A}%
_{10}:e_{1}^{2}=e_{1},e_{2}^{2}=e_{2},e_{3}^{2}=e_{3},e_{4}^{2}=e_{4}.$

\item $\mathcal{A}%
_{11}:e_{1}^{2}=e_{1},e_{2}^{2}=e_{2},e_{3}^{2}=e_{3},e_{3}\cdot
e_{4}=e_{4}. $

\item $\mathcal{A}_{12}:e_{1}^{2}=e_{1},e_{1}\cdot
e_{2}=e_{2},e_{3}^{2}=e_{3},e_{3}\cdot e_{4}=e_{4}.$

\item $\mathcal{A}_{13}:e_{1}^{2}=e_{1},e_{2}^{2}=e_{2},e_{2}\cdot
e_{3}=e_{3},e_{2}\cdot e_{4}=e_{4}.$

\item $\mathcal{A}_{14}:e_{1}^{2}=e_{1},e_{2}^{2}=e_{2},e_{2}\cdot
e_{3}=e_{3},e_{2}\cdot e_{4}=e_{4},e_{3}^{2}=e_{4}.$

\item $\mathcal{A}_{15}:e_{1}^{2}=e_{1},e_{1}\cdot e_{2}=e_{2},e_{1}\cdot
e_{3}=e_{3},e_{1}\cdot e_{4}=e_{4}.$

\item $\mathcal{A}_{16}:e_{1}^{2}=e_{1},e_{1}\cdot e_{2}=e_{2},e_{1}\cdot
e_{3}=e_{3},e_{1}\cdot e_{4}=e_{4},e_{2}^{2}=e_{3}.$

\item $\mathcal{A}_{17}:e_{1}^{2}=e_{1},e_{1}\cdot e_{2}=e_{2},e_{1}\cdot
e_{3}=e_{3},e_{1}\cdot e_{4}=e_{4},e_{2}^{2}=e_{3},e_{2}\cdot e_{3}=e_{4}.$

\item $\mathcal{A}_{18}:e_{1}^{2}=e_{1},e_{1}\cdot e_{2}=e_{2},e_{1}\cdot
e_{3}=e_{3},e_{1}\cdot e_{4}=e_{4},e_{2}^{2}=e_{4},e_{3}^{2}=e_{4}.$

\item $\mathcal{A}_{19}:e_{1}^{2}=e_{1},e_{2}^{2}=e_{2},e_{3}^{2}=e_{3}.$

\item $\mathcal{A}_{20}:e_{1}^{2}=e_{1},e_{2}^{2}=e_{2},e_{2}\cdot
e_{3}=e_{3}.$

\item $\mathcal{A}_{21}:e_{1}^{2}=e_{1},e_{1}\cdot e_{2}=e_{2},e_{1}\cdot
e_{3}=e_{3}.$

\item $\mathcal{A}_{22}:e_{1}^{2}=e_{1},e_{1}\cdot e_{2}=e_{2},e_{1}\cdot
e_{3}=e_{3},e_{2}^{2}=e_{3}.$

\item $\mathcal{A}_{23}:e_{1}^{2}=e_{1},e_{2}^{2}=e_{2}.$

\item $\mathcal{A}_{24}:e_{1}^{2}=e_{1},e_{2}^{2}=e_{2},e_{3}^{2}=e_{4}.$

\item $\mathcal{A}_{25}:e_{1}^{2}=e_{1},e_{1}\cdot e_{2}=e_{2}.$

\item $\mathcal{A}_{26}:e_{1}^{2}=e_{1},e_{1}\cdot
e_{2}=e_{2},e_{3}^{2}=e_{4}.$

\item $\mathcal{A}_{27}:e_{1}^{2}=e_{1}.$

\item $\mathcal{A}_{28}:e_{1}^{2}=e_{1},e_{2}\cdot e_{3}=e_{4}.$

\item $\mathcal{A}_{29}:e_{1}^{2}=e_{1},e_{2}^{2}=e_{3}.$

\item $\mathcal{A}_{30}:e_{1}^{2}=e_{1},e_{2}^{2}=e_{3},e_{2}\cdot
e_{3}=e_{4}.$
\end{itemize}
\end{theorem}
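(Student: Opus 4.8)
The statement recalled here is the classification from \cite{Burde2013}, so the natural plan is to reconstruct its proof from the structure theory of finite-dimensional commutative associative algebras. The idea is to stratify an arbitrary such $\mathcal{A}$ by its Jacobson radical $J=\mathrm{rad}(\mathcal{A})$, the unique maximal nilpotent ideal. Since $\mathbb{C}$ is algebraically closed of characteristic zero, the semisimple quotient is a finite product of copies of the base field, that is $\mathcal{A}/J\cong \mathbb{C}^{k}$ with $0\le k\le 4$, and by the Wedderburn--Malcev theorem (separability is automatic here) there is a vector-space decomposition $\mathcal{A}=S\oplus J$ in which $S\cong \mathbb{C}^{k}$ is a subalgebra and $\dim J=4-k$. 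Thus each $\mathcal{A}$ is reconstructed from a semisimple part $S$, a nilpotent algebra structure on $J$, and the action of $S$ on $J$ by multiplication, all constrained by commutativity and associativity.

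First I would dispose of the two extreme strata. When $k=4$ the radical is trivial and $\mathcal{A}\cong\mathbb{C}^{4}$, which is exactly $\mathcal{A}_{10}$. When $k=0$ the algebra coincides with its radical, so the problem reduces to classifying the $4$-dimensional \emph{nilpotent} commutative associative algebras, and these account for $\mathcal{A}_{01}$--$\mathcal{A}_{09}$. For the intermediate strata $1\le k\le 3$ I would use the orthogonal idempotents $e_{1},\dots,e_{k}$ spanning $S$ to Peirce-decompose the radical: since the algebra is commutative and $e_ie_j=\delta_{ij}e_i$, one gets $J=\bigoplus_{i=1}^{k}e_{i}J\oplus J_{0}$, where $J_{0}$ is annihilated by every $e_{i}$, and moreover $(e_{i}J)(e_{j}J)=0$ for $i\ne j$. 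Each block is itself a nilpotent algebra and an $S$-module, an idempotent either acting as the identity on a block or annihilating it; enumerating the compatible structures, organized by $k$ and by the dimensions of the blocks, produces the families $\mathcal{A}_{11}$--$\mathcal{A}_{30}$.

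The bulk of the work is the nilpotent classification. Here I would use the filtration $\mathcal{A}\supset \mathcal{A}^{2}\supset \mathcal{A}^{3}\supset\cdots$ together with the central-extension (cohomological) method: a nilpotent algebra of dimension $n$ is a central extension of one of dimension $<n$ governed by a symmetric $2$-cocycle valued in the annihilator, and isomorphism classes correspond to orbits of such cocycles under the induced automorphism action, exactly the $\theta$-orbit formalism of Section~\ref{sec1}. Concretely I would split into cases according to $\dim\mathcal{A}^{2}\in\{0,1,2,3\}$ and the dimension of the annihilator $\mathrm{Ann}(\mathcal{A})=\{x:x\mathcal{A}=0\}$, normalize the structure constants by a change of basis, and read off the canonical forms.

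I expect the main obstacle to be precisely the final reduction: for each stratum there is a moderately large space of candidate multiplication tables, and the difficulty is to prove simultaneously that the listed representatives exhaust all orbits and are pairwise non-isomorphic. This forces one to compute the relevant base-change group acting on each piece and to separate families using isomorphism invariants such as $\dim\mathcal{A}^{2}$, $\dim\mathrm{Ann}(\mathcal{A})$, the rank $k$ of the semisimple quotient, and the quadratic and bilinear data induced on $J/J^{2}$. These invariant computations and orbit reductions are routine but lengthy, and it is exactly here that the completeness and non-redundancy of the list of thirty algebras must be secured.
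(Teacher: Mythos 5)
You should first be aware that the paper does not prove this statement at all: it is quoted verbatim from \cite{Burde2013} as external input, and the rest of the paper (Section \ref{3y4} and the proof of Theorem \ref{main}) simply takes this list as its starting point. So there is no proof in the paper to compare yours against; your text has to be judged as a free-standing reconstruction of the classification in the cited reference. At the level of strategy, your skeleton is the standard and correct one: split off the semisimple part via Wedderburn--Malcev (valid here since $\mathbb{C}$ is perfect, so $\mathcal{A}\cong S\oplus J$ with $S\cong\mathbb{C}^{k}$), Peirce-decompose the radical under the orthogonal idempotents (the action of each $e_{i}$ on $J$ is an idempotent operator, hence diagonalizable with eigenvalues $0,1$, and the joint eigenspace decomposition behaves as you state), and classify the nilpotent stratum by the central-extension/cocycle-orbit method, which is indeed the same formalism the paper itself uses in Section \ref{sec1} for Poisson structures.

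The genuine gap is that the proof never happens: everything that makes this a theorem rather than a framework is deferred in your last paragraph as ``routine but lengthy.'' For a classification result, the enumeration \emph{is} the content. Concretely, you do not (i) derive the nine nilpotent algebras $\mathcal{A}_{01}$--$\mathcal{A}_{09}$, which requires running the central-extension machinery through dimensions $2$, $3$, $4$ with explicit orbit computations under the automorphism groups (comparable in bulk to what the paper does in Section 4 for a single associative algebra); (ii) enumerate, for each $1\le k\le 3$, the admissible $S$-module structures and nilpotent multiplications on $J$ and check which combinations are associative and which coincide after base change, so as to obtain exactly $\mathcal{A}_{11}$--$\mathcal{A}_{30}$; or (iii) prove pairwise non-isomorphism of the thirty algebras --- you name plausible invariants ($\dim\mathcal{A}^{2}$, $\dim\mathrm{Ann}(\mathcal{A})$, $k$), but these do not separate all items on the list (for instance $\mathcal{A}_{19}$ and $\mathcal{A}_{20}$, or $\mathcal{A}_{11}$, $\mathcal{A}_{12}$ and $\mathcal{A}_{13}$, agree on all three, and distinguishing them needs finer data such as the isomorphism type of the $S$-action on $J$). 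Until those three items are carried out, what you have is a correct plan for a proof, not a proof of the statement.
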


Now, we present the main result of the paper. Either

\begin{theorem}
\label{main}
Let $\mathcal{P}$ be a complex Poisson algebra of dimension $4$. Then $%
\mathcal{P}$ is isomorphic to one of the Lie algebras given in \cite[Lemma 3]%
{Burde99} or to one of the following algebras:

\begin{itemize}
\item $\mathcal{P}_{4,1}:e_{1}^{2}=e_{2}.$

\item $\mathcal{P}_{4,2}:\left\{ 
\begin{tabular}{l}
$e_{1}^{2}=e_{2},$ \\ 
$\left[ e_{1},e_{3}\right] =e_{2}.$%
\end{tabular}%
\right. $

\item $\mathcal{P}_{4,3}:\left\{ 
\begin{tabular}{l}
$e_{1}^{2}=e_{2},$ \\ 
$\left[ e_{3},e_{4}\right] =e_{2}.$%
\end{tabular}%
\right. $

\item $\mathcal{P}_{4,4}^{\alpha }:\left\{ 
\begin{tabular}{l}
$e_{1}^{2}=e_{2},$ \\ 
$\left[ e_{1},e_{3}\right] =e_{3},\left[ e_{1},e_{4}\right] =\alpha e_{4}.$%
\end{tabular}%
\right. $

\item $\mathcal{P}_{4,5}:\left\{ 
\begin{tabular}{l}
$e_{1}^{2}=e_{2},$ \\ 
$\left[ e_{1},e_{4}\right] =e_{2},\left[ e_{1},e_{3}\right] =e_{3}.$%
\end{tabular}%
\right. $

\item $\mathcal{P}_{4,6}:\left\{ 
\begin{tabular}{l}
$e_{1}^{2}=e_{2},$ \\ 
$\left[ e_{3},e_{4}\right] =e_{2},\left[ e_{1},e_{3}\right] =e_{3},\left[
e_{1},e_{4}\right] =-e_{4}.$%
\end{tabular}%
\right. $

\item $\mathcal{P}_{4,7}:\left\{ 
\begin{tabular}{l}
$e_{1}^{2}=e_{2},$ \\ 
$\left[ e_{1},e_{3}\right] =e_{3},\left[ e_{1},e_{4}\right] =e_{3}+e_{4}.$%
\end{tabular}%
\right. $

\item $\mathcal{P}_{4,8}:\left\{ 
\begin{tabular}{l}
$e_{1}^{2}=e_{2},$ \\ 
$\left[ e_{1},e_{4}\right] =e_{3}.$%
\end{tabular}%
\right. $

\item $\mathcal{P}_{4,9}:\left\{ 
\begin{tabular}{l}
$e_{1}^{2}=e_{2},$ \\ 
$\left[ e_{1},e_{3}\right] =e_{2},\left[ e_{1},e_{4}\right] =e_{3}.$%
\end{tabular}%
\right. $

\item $\mathcal{P}_{4,10}:\left\{ 
\begin{tabular}{l}
$e_{1}^{2}=e_{2},$ \\ 
$\left[ e_{3},e_{4}\right] =e_{2},\left[ e_{1},e_{4}\right] =e_{3}.$%
\end{tabular}%
\right. $

\item $\mathcal{P}_{4,11}:\left\{ 
\begin{tabular}{l}
$e_{1}^{2}=e_{2},$ \\ 
$\left[ e_{3},e_{4}\right] =e_{3}.$%
\end{tabular}%
\right. $

\item $\mathcal{P}_{4,12}:\left\{ 
\begin{tabular}{l}
$e_{1}^{2}=e_{2},$ \\ 
$\left[ e_{1},e_{4}\right] =e_{2},\left[ e_{3},e_{4}\right] =e_{3}.$%
\end{tabular}%
\right. $

\item $\mathcal{P}_{4,13}^{\alpha }:\left\{ 
\begin{tabular}{l}
$e_{1}^{2}=e_{3},e_{2}^{2}=e_{3},$ \\ 
$\left[ e_{1},e_{2}\right] =\alpha e_{3},\left[ e_{1},e_{4}\right] =e_{4}.$%
\end{tabular}%
\right. $

\item $\mathcal{P}_{4,14}:\left\{ 
\begin{tabular}{l}
$e_{1}^{2}=e_{3},e_{2}^{2}=e_{3},$ \\ 
$\left[ e_{1},e_{4}\right] =e_{3}.$%
\end{tabular}%
\right. $

\item $\mathcal{P}_{4,15}:\left\{ 
\begin{tabular}{l}
$e_{1}^{2}=e_{3},e_{2}^{2}=e_{3},$ \\ 
$\left[ e_{1},e_{4}\right] =e_{3},\left[ e_{1},e_{2}\right] =e_{4}.$%
\end{tabular}%
\right. $

\item $\mathcal{P}_{4,16}^{\alpha }:\left\{ 
\begin{tabular}{l}
$e_{1}^{2}=e_{3},e_{2}^{2}=e_{3},$ \\ 
$\left[ e_{1},e_{2}\right] =\alpha e_{3}.$%
\end{tabular}%
\right. $

\item $\mathcal{P}_{4,17}:\left\{ 
\begin{tabular}{l}
$e_{1}^{2}=e_{3},e_{2}^{2}=e_{3},$ \\ 
$\left[ e_{1},e_{2}\right] =e_{4}.$%
\end{tabular}%
\right. $

\item $\mathcal{P}_{4,18}:\left\{ 
\begin{tabular}{l}
$e_{1}^{2}=e_{3},e_{2}^{2}=e_{3},$ \\ 
$\left[ e_{1},e_{4}\right] =e_{3},\left[ e_{2},e_{4}\right] =ie_{3}.$%
\end{tabular}%
\right. $

\item $\mathcal{P}_{4,19}:\left\{ 
\begin{tabular}{l}
$e_{1}^{2}=e_{3},e_{2}^{2}=e_{3},$ \\ 
$\left[ e_{1},e_{4}\right] =e_{3},\left[ e_{2},e_{4}\right] =ie_{3},\left[
e_{1},e_{2}\right] =e_{4}.$%
\end{tabular}%
\right. $

\item $\mathcal{P}_{4,20}^{\alpha }:\left\{ 
\begin{tabular}{l}
$e_{1}^{2}=e_{3},e_{2}^{2}=e_{3},$ \\ 
$\left[ e_{1},e_{2}\right] =\alpha e_{3},\left[ e_{1},e_{4}\right] =e_{4},%
\left[ e_{2},e_{4}\right] =ie_{4}.$%
\end{tabular}%
\right. $

\item $\mathcal{P}_{4,21}:e_{1}^{2}=e_{2},e_{1}\cdot e_{2}=e_{3}.$

\item $\mathcal{P}_{4,22}:\left\{ 
\begin{tabular}{l}
$e_{1}^{2}=e_{2},e_{1}\cdot e_{2}=e_{3},$ \\ 
$\left[ e_{1},e_{4}\right] =e_{4}.$%
\end{tabular}%
\right. $

\item $\mathcal{P}_{4,23}:\left\{ 
\begin{tabular}{l}
$e_{1}^{2}=e_{2},e_{1}\cdot e_{2}=e_{3},$ \\ 
$\left[ e_{1},e_{4}\right] =e_{3}.$%
\end{tabular}%
\right. $

\item $\mathcal{P}_{4,24}:e_{1}^{2}=e_{3},e_{2}^{2}=e_{4}.$

\item $\mathcal{P}_{4,25}^{\alpha }:\left\{ 
\begin{tabular}{l}
$e_{1}^{2}=e_{3},e_{2}^{2}=e_{4},$ \\ 
$\left[ e_{1},e_{2}\right] =e_{3}+\alpha e_{4}.$%
\end{tabular}%
\right. $

\item $\mathcal{P}_{4,26}:\left\{ 
\begin{tabular}{l}
$e_{1}\cdot e_{2}=e_{4},e_{2}^{2}=e_{3},$ \\ 
$\left[ e_{1},e_{2}\right] =e_{1},\left[ e_{1},e_{3}\right] =2e_{4},\left[
e_{2},e_{4}\right] =-e_{4}.$%
\end{tabular}%
\right. $

\item $\mathcal{P}_{4,27}:\left\{ 
\begin{tabular}{l}
$e_{1}\cdot e_{2}=e_{4},e_{2}^{2}=e_{3},$ \\ 
$\left[ e_{1},e_{2}\right] =e_{3}.$%
\end{tabular}%
\right. $

\item $\mathcal{P}_{4,28}^{\alpha }:\left\{ 
\begin{tabular}{l}
$e_{1}\cdot e_{2}=e_{4},e_{2}^{2}=e_{3},$ \\ 
$\left[ e_{1},e_{2}\right] =\alpha e_{4}.$%
\end{tabular}%
\right. $

\item $\mathcal{P}_{4,29}:\left\{ 
\begin{tabular}{l}
$e_{1}\cdot e_{2}=e_{4},e_{3}^{2}=e_{4},$ \\ 
$\left[ e_{1},e_{3}\right] =-e_{1},\left[ e_{2},e_{3}\right] =e_{2},\left[
e_{1},e_{2}\right] =e_{3}.$%
\end{tabular}%
\right. $

\item $\mathcal{P}_{4,30}:\left\{ 
\begin{tabular}{l}
$e_{1}\cdot e_{2}=e_{4},e_{3}^{2}=e_{4},$ \\ 
$\left[ e_{1},e_{3}\right] =e_{4}.$%
\end{tabular}%
\right. $

\item $\mathcal{P}_{4,31}^{\alpha }:\left\{ 
\begin{tabular}{l}
$e_{1}\cdot e_{2}=e_{4},e_{3}^{2}=e_{4},$ \\ 
$\left[ e_{1},e_{2}\right] =\alpha e_{4}.$%
\end{tabular}%
\right. $

\item $\mathcal{P}_{4,32}:e_{1}^{2}=e_{2},e_{1}\cdot
e_{2}=e_{4},e_{3}^{2}=e_{4}.$

\item $\mathcal{P}_{4,33}:\left\{ 
\begin{tabular}{l}
$e_{1}^{2}=e_{2},e_{1}\cdot e_{2}=e_{4},e_{3}^{2}=e_{4},$ \\ 
$\left[ e_{1},e_{3}\right] =e_{4}.$%
\end{tabular}%
\right. $

\item $\mathcal{P}_{4,34}:e_{1}^{2}=e_{2},e_{1}\cdot e_{2}=e_{3},e_{1}\cdot
e_{3}=e_{4},e_{2}^{2}=e_{4}.$

\item $\mathcal{P}%
_{4,35}:e_{1}^{2}=e_{1},e_{2}^{2}=e_{2},e_{3}^{2}=e_{3},e_{4}^{2}=e_{4}.$

\item $\mathcal{P}%
_{4,36}:e_{1}^{2}=e_{1},e_{2}^{2}=e_{2},e_{3}^{2}=e_{3},e_{3}\cdot
e_{4}=e_{4}.$

\item $\mathcal{P}_{4,37}:e_{1}^{2}=e_{1},e_{1}\cdot
e_{2}=e_{2},e_{3}^{2}=e_{3},e_{3}\cdot e_{4}=e_{4}.$

\item $\mathcal{P}_{4,38}:e_{1}^{2}=e_{1},e_{2}^{2}=e_{2},e_{2}\cdot
e_{3}=e_{3},e_{2}\cdot e_{4}=e_{4}.$

\item $\mathcal{P}_{4,39}:\left\{ 
\begin{tabular}{l}
$e_{1}^{2}=e_{1},e_{2}^{2}=e_{2},e_{2}\cdot e_{3}=e_{3},e_{2}\cdot
e_{4}=e_{4},$ \\ 
$\left[ e_{3},e_{4}\right] =e_{3}.$%
\end{tabular}%
\right. $

\item $\mathcal{P}_{4,40}:e_{1}^{2}=e_{1},e_{2}^{2}=e_{2},e_{2}\cdot
e_{3}=e_{3},e_{2}\cdot e_{4}=e_{4},e_{3}^{2}=e_{4}.$

\item $\mathcal{P}_{4,41}^{\alpha }:\left\{ 
\begin{tabular}{l}
$e_{1}^{2}=e_{1},e_{1}\cdot e_{2}=e_{2},e_{1}\cdot e_{3}=e_{3},e_{1}\cdot
e_{4}=e_{4},$ \\ 
$\left[ e_{2},e_{4}\right] =\alpha e_{2},\left[ e_{3},e_{4}\right] =e_{3}.$%
\end{tabular}%
\right. $

\item $\mathcal{P}_{4,42}:\left\{ 
\begin{tabular}{l}
$e_{1}^{2}=e_{1},e_{1}\cdot e_{2}=e_{2},e_{1}\cdot e_{3}=e_{3},e_{1}\cdot
e_{4}=e_{4},$ \\ 
$\left[ e_{2},e_{4}\right] =e_{2},\left[ e_{3},e_{4}\right] =e_{2}+e_{3}.$%
\end{tabular}%
\right. $

\item $\mathcal{P}_{4,43}:e_{1}^{2}=e_{1},e_{1}\cdot e_{2}=e_{2},e_{1}\cdot
e_{3}=e_{3},e_{1}\cdot e_{4}=e_{4}.$

\item $\mathcal{P}_{4,44}:\left\{ 
\begin{tabular}{l}
$e_{1}^{2}=e_{1},e_{1}\cdot e_{2}=e_{2},e_{1}\cdot e_{3}=e_{3},e_{1}\cdot
e_{4}=e_{4},$ \\ 
$\left[ e_{3},e_{4}\right] =e_{2}.$%
\end{tabular}%
\right. $

\item $\mathcal{P}_{4,45}:\left\{ 
\begin{tabular}{l}
$e_{1}^{2}=e_{1},e_{1}\cdot e_{2}=e_{2},e_{1}\cdot e_{3}=e_{3},e_{1}\cdot
e_{4}=e_{4},$ \\ 
$\left[ e_{3},e_{4}\right] =e_{2},\left[ e_{2},e_{3}\right] =e_{3},\left[
e_{2},e_{4}\right] =-e_{4}.$%
\end{tabular}%
\right. $

\item $\mathcal{P}_{4,46}:e_{1}^{2}=e_{1},e_{1}\cdot e_{2}=e_{2},e_{1}\cdot
e_{3}=e_{3},e_{1}\cdot e_{4}=e_{4},e_{2}^{2}=e_{3}.$

\item $\mathcal{P}_{4,47}:\left\{ 
\begin{tabular}{l}
$e_{1}^{2}=e_{1},e_{1}\cdot e_{2}=e_{2},e_{1}\cdot e_{3}=e_{3},e_{1}\cdot
e_{4}=e_{4},e_{2}^{2}=e_{3},$ \\ 
$\left[ e_{2},e_{4}\right] =e_{4}.$%
\end{tabular}%
\right. $

\item $\mathcal{P}_{4,48}:\left\{ 
\begin{tabular}{l}
$e_{1}^{2}=e_{1},e_{1}\cdot e_{2}=e_{2},e_{1}\cdot e_{3}=e_{3},e_{1}\cdot
e_{4}=e_{4},e_{2}^{2}=e_{3},$ \\ 
$\left[ e_{2},e_{4}\right] =e_{3}.$%
\end{tabular}%
\right. $

\item $\mathcal{P}_{4,49}:e_{1}^{2}=e_{1},e_{1}\cdot e_{2}=e_{2},e_{1}\cdot
e_{3}=e_{3},e_{1}\cdot e_{4}=e_{4},e_{2}^{2}=e_{3},e_{2}\cdot e_{3}=e_{4}.$

\item $\mathcal{P}_{4,50}^{\alpha }:\left\{ 
\begin{tabular}{l}
$e_{1}^{2}=e_{1},e_{1}\cdot e_{2}=e_{2},e_{1}\cdot e_{3}=e_{3},e_{1}\cdot
e_{4}=e_{4},e_{2}^{2}=e_{4},e_{3}^{2}=e_{4},$ \\ 
$\left[ e_{2},e_{3}\right] =\alpha e_{4}.$%
\end{tabular}%
\right. $

\item $\mathcal{P}_{4,51}:e_{1}^{2}=e_{1},e_{2}^{2}=e_{2},e_{3}^{2}=e_{3}.$

\item $\mathcal{P}_{4,52}:e_{1}^{2}=e_{1},e_{2}^{2}=e_{2},e_{2}\cdot
e_{3}=e_{3}.$

\item $\mathcal{P}_{4,53}:e_{1}^{2}=e_{1},e_{1}\cdot e_{2}=e_{2},e_{1}\cdot
e_{3}=e_{3}.$

\item $\mathcal{P}_{4,54}:\left\{ 
\begin{tabular}{l}
$e_{1}^{2}=e_{1},e_{1}\cdot e_{2}=e_{2},e_{1}\cdot e_{3}=e_{3},$ \\ 
$\left[ e_{2},e_{3}\right] =e_{2}.$%
\end{tabular}%
\right. $

\item $\mathcal{P}_{4,55}:e_{1}^{2}=e_{1},e_{1}\cdot e_{2}=e_{2},e_{1}\cdot
e_{3}=e_{3},e_{2}^{2}=e_{3}.$

\item $\mathcal{P}_{4,56}:e_{1}^{2}=e_{1},e_{2}^{2}=e_{2}.$

\item $\mathcal{P}_{4,57}:\left\{ 
\begin{tabular}{l}
$e_{1}^{2}=e_{1},e_{2}^{2}=e_{2},$ \\ 
$\left[ e_{3},e_{4}\right] =e_{3}.$%
\end{tabular}%
\right. $

\item $\mathcal{P}_{4,58}:e_{1}^{2}=e_{1},e_{2}^{2}=e_{2},e_{3}^{2}=e_{4}.$

\item $\mathcal{P}_{4,59}:e_{1}^{2}=e_{1},e_{1}\cdot e_{2}=e_{2}.$

\item $\mathcal{P}_{4,60}:\left\{ 
\begin{tabular}{l}
$e_{1}^{2}=e_{1},e_{1}\cdot e_{2}=e_{2},$ \\ 
$\left[ e_{3},e_{4}\right] =e_{3}.$%
\end{tabular}%
\right. $

\item $\mathcal{P}_{4,61}:e_{1}^{2}=e_{1},e_{1}\cdot
e_{2}=e_{2},e_{3}^{2}=e_{4}.$

\item $\mathcal{P}_{4,62}^{\alpha }:\left\{ 
\begin{tabular}{l}
$e_{1}^{2}=e_{1},$ \\ 
$\left[ e_{2},e_{4}\right] =\alpha e_{2},\left[ e_{3},e_{4}\right] =e_{3}.$%
\end{tabular}%
\right. $

\item $\mathcal{P}_{4,63}:\left\{ 
\begin{tabular}{l}
$e_{1}^{2}=e_{1},$ \\ 
$\left[ e_{2},e_{4}\right] =e_{2},\left[ e_{3},e_{4}\right] =e_{2}+e_{3}.$%
\end{tabular}%
\right. $

\item $\mathcal{P}_{4,64}:e_{1}^{2}=e_{1}.$

\item $\mathcal{P}_{4,65}:\left\{ 
\begin{tabular}{l}
$e_{1}^{2}=e_{1},$ \\ 
$\left[ e_{3},e_{4}\right] =e_{2}.$%
\end{tabular}%
\right. $

\item $\mathcal{P}_{4,66}:\left\{ 
\begin{tabular}{l}
$e_{1}^{2}=e_{1},$ \\ 
$\left[ e_{3},e_{4}\right] =e_{2},\left[ e_{2},e_{3}\right] =e_{3},\left[
e_{2},e_{4}\right] =-e_{4}.$%
\end{tabular}%
\right. $

\item $\mathcal{P}_{4,67}^{\alpha }:\left\{ 
\begin{tabular}{l}
$e_{1}^{2}=e_{1},e_{2}\cdot e_{3}=e_{4},$ \\ 
$\left[ e_{2},e_{3}\right] =\alpha e_{4}.$%
\end{tabular}%
\right. $

\item $\mathcal{P}_{4,68}:\left\{ 
\begin{tabular}{l}
$e_{1}^{2}=e_{1},e_{2}^{2}=e_{3},$ \\ 
$\left[ e_{2},e_{4}\right] =e_{4}.$%
\end{tabular}%
\right. $

\item $\mathcal{P}_{4,69}:\left\{ 
\begin{tabular}{l}
$e_{1}^{2}=e_{1},e_{2}^{2}=e_{3},$ \\ 
$\left[ e_{2},e_{4}\right] =e_{3}.$%
\end{tabular}%
\right. $

\item $\mathcal{P}_{4,70}:e_{1}^{2}=e_{1},e_{2}^{2}=e_{3}.$

\item $\mathcal{P}_{4,71}:e_{1}^{2}=e_{1},e_{2}^{2}=e_{3},e_{2}\cdot
e_{3}=e_{4}.$
\end{itemize}
Between these algebras there are precisely the following isomorphisms:

\begin{itemize}
\item $\mathcal{P}_{4,4}^{\alpha } \cong \mathcal{P}_{4,4}^{\beta}$ if and
only if $(\alpha -\beta)(\alpha \beta -1)=0$.

\item $\mathcal{P}_{4,13}^{\alpha } \cong \mathcal{P}_{4,13}^{\beta}$ if and
only if $\alpha^2 =\beta^2$.

\item $\mathcal{P}_{4,16}^{\alpha } \cong \mathcal{P}_{4,16}^{\beta}$ if and
only if $\alpha^2 =\beta^2$.

\item $\mathcal{P}_{4,20}^{\alpha } \cong \mathcal{P}_{4,20}^{\beta}$ if and
only if $\alpha^2 =\beta^2$.

\item $\mathcal{P}_{4,25}^{\alpha } \cong \mathcal{P}_{4,25}^{\beta}$ if and
only if $\alpha =\beta$.

\item $\mathcal{P}_{4,28}^{\alpha } \cong \mathcal{P}_{4,28}^{\beta}$ if and
only if $\alpha =\beta$.

\item $\mathcal{P}_{4,31}^{\alpha } \cong \mathcal{P}_{4,31}^{\beta}$ if and
only if $\alpha^2 =\beta^2$.

\item $\mathcal{P}_{4,41}^{\alpha } \cong \mathcal{P}_{4,41}^{\beta}$ if and
only if $(\alpha -\beta)(\alpha \beta -1)=0$.

\item $\mathcal{P}_{4,50}^{\alpha } \cong \mathcal{P}_{4,50}^{\beta}$ if and
only if $\alpha^2 =\beta^2$.

\item $\mathcal{P}_{4,62}^{\alpha } \cong \mathcal{P}_{4,62}^{\beta}$ if and
only if $(\alpha -\beta)(\alpha \beta -1)=0$.

\item $\mathcal{P}_{4,67}^{\alpha } \cong \mathcal{P}_{4,67}^{\beta}$ if and
only if $\alpha^2 =\beta^2$.

\end{itemize}
\end{theorem}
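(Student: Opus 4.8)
The plan is to apply the three-step procedure of Section \ref{sec1} to each of the thirty commutative associative algebras $\mathcal{A}_{01},\dots,\mathcal{A}_{30}$ of \cite{Burde2013}, thereby exhausting all $4$-dimensional Poisson structures. By the second Remark of Section \ref{sec1}, a Poisson algebra built on $\mathcal{A}_{0i}$ is never isomorphic to one built on $\mathcal{A}_{0j}$ for $i\neq j$, so the thirty cases are genuinely disjoint and may be treated independently. I would first dispose of the trivial algebra $\mathcal{A}_{01}$ (zero multiplication): here the Leibniz condition in Definition \ref{ss} is vacuous, so $Z^2(\mathcal{A}_{01},\mathcal{A}_{01})$ is precisely the set of all Lie brackets on a $4$-dimensional space, while $\mathrm{Aut}(\mathcal{A}_{01})=\mathrm{GL}_4(\mathbb{C})$. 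Thus the orbits are exactly the isomorphism classes of $4$-dimensional complex Lie algebras, i.e. the algebras of \cite[Lemma 3]{Burde99}; this accounts for the first family named in the statement.

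For each remaining $\mathcal{A}_{0i}$ I would compute $Z^2(\mathcal{A}_{0i},\mathcal{A}_{0i})$ explicitly. The key structural observation is that the identity $\theta(x\cdot y,z)=\theta(x,z)\cdot y+x\cdot\theta(y,z)$ says exactly that, for each fixed $z$, the operator $D_z:=\theta(-,z)$ is a derivation of $(\mathcal{A}_{0i},\cdot)$. Hence I would first determine the derivation algebra $\mathrm{Der}(\mathcal{A}_{0i})$, parametrise a general skew-symmetric $\theta=(B_1,\dots,B_4)$ whose partial maps land in $\mathrm{Der}(\mathcal{A}_{0i})$, and then impose the Jacobi condition $\theta(\theta(x,y),z)+\theta(\theta(y,z),x)+\theta(\theta(z,x),y)=0$. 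The Leibniz constraints are linear and the Jacobi constraints are quadratic in the entries of the $B_k$; solving this system yields a parametric description of $Z^2(\mathcal{A}_{0i},\mathcal{A}_{0i})$.

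Next, for each such $Z^2$ I would compute $\mathrm{Aut}(\mathcal{A}_{0i})$ and reduce $\theta$ to canonical form under the action $B_i'=\sum_{j}b_{ij}\,\phi^{t}B_j\phi$ recorded at the end of Section \ref{sec1}. A judicious sequence of automorphisms (scalings, the normalisations of Remark \ref{[1 0]}, and changes of the complementary basis vectors) brings the free structure constants to a short list of normal forms, producing the representatives $\mathcal{P}_{4,1},\dots,\mathcal{P}_{4,71}$. Finally I would separate the surviving parametric families by the residual stabiliser action on the remaining parameter, which gives exactly the equivalences listed at the end of the statement: for instance, in $\mathcal{P}_{4,13}^{\alpha}$ exchanging the two square roots $e_1,e_2$ of $e_3$ sends $\alpha\mapsto-\alpha$, forcing $\alpha^2=\beta^2$, whereas in $\mathcal{P}_{4,4}^{\alpha}$ swapping the two $\mathrm{ad}_{e_1}$-eigenvectors $e_3,e_4$ and rescaling $e_1$ produces $\alpha\mapsto\alpha^{-1}$, forcing $(\alpha-\beta)(\alpha\beta-1)=0$.

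The main obstacle is the combinatorial bulk together with the orbit analysis for the algebras possessing large automorphism groups—typically the unital or idempotent-rich algebras $\mathcal{A}_{10}$ through $\mathcal{A}_{18}$, whose ample $\mathrm{Der}$ and $\mathrm{Aut}$ cause many a priori distinct $\theta$ to collapse into a single orbit. Guaranteeing that the final list is simultaneously \emph{exhaustive} (every orbit is met) and \emph{irredundant} (no two representatives are equivalent) is the delicate point; it must be verified case by case, and the closing isomorphism list certifies irredundancy within the one-parameter families.
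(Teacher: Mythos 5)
Your plan coincides with the paper's own proof: the trivial algebra $\mathcal{A}_{01}$ is dispatched by observing that its Poisson structures are exactly the Lie brackets classified in \cite[Lemma 3]{Burde99}, and every other algebra $\mathcal{A}_{0i}$ from \cite{Burde2013} is handled by computing $Z^{2}(\mathcal{A}_{0i},\mathcal{A}_{0i})$, the group $\mathrm{Aut}(\mathcal{A}_{0i})$, and canonical orbit representatives under the action \eqref{action} (using Remark \ref{[1 0]} and suitable scalings), with the residual stabiliser action yielding the isomorphism conditions within the one-parameter families. This is essentially the same approach the paper carries out case by case, so the proposal is correct as a strategy; the only slight inaccuracy is your guess that $\mathcal{A}_{10}$--$\mathcal{A}_{18}$ carry the computational bulk, whereas most of those have $Z^{2}=\{0\}$ and the genuinely laborious cases are $\mathcal{A}_{02}$, $\mathcal{A}_{03}$, $\mathcal{A}_{15}$ and $\mathcal{A}_{27}$.
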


\section{The proof of Theorem \ref{main}}

\underline{$\left( \mathcal{P},\cdot \right) =\mathcal{A}_{01}$.} Then $%
\mathcal{P}$ is a Lie algebra. So $\mathcal{P}$ is isomorphic to one of the
Lie algebras given in \cite[Lemma 3]{Burde99}.

\underline{$\left( \mathcal{P},\cdot \right) =\mathcal{A}_{02}$.} The
automorphism group of $\mathcal{A}_{02}$, $\text{Aut}\left( \mathcal{A}%
_{02}\right) $, consists of the automorphisms $\phi $ given by a matrix of
the following form:%
\begin{equation*}
\phi =%
\begin{pmatrix}
a_{11} & 0 & 0 & 0 \\ 
a_{21} & a_{11}^{2} & a_{23} & a_{24} \\ 
a_{31} & 0 & a_{33} & a_{34} \\ 
a_{41} & 0 & a_{43} & a_{44}%
\end{pmatrix}%
.
\end{equation*}%
Let $\theta =\left( B_{1},B_{2},B_{3},B_{4}\right) $ be an arbitrary element
of $Z^{2}\left( \mathcal{P},\mathcal{P}\right) $. Then%
\begin{eqnarray*}
B_{1} &=&0, \\
B_{2} &=&\alpha _{1}\Delta _{1,3}+\alpha _{2}\Delta _{1,4}+\alpha _{3}\Delta
_{3,4}, \\
B_{3} &=&\alpha _{4}\Delta _{1,3}+\alpha _{5}\Delta _{1,4}+\alpha _{6}\Delta
_{3,4}, \\
B_{4} &=&\alpha _{7}\Delta _{1,3}+\alpha _{8}\Delta _{1,4}+\alpha _{9}\Delta
_{3,4},
\end{eqnarray*}%
such that%
\begin{eqnarray*}
\alpha _{4}\alpha _{9}-\alpha _{6}\alpha _{7} &=&0, \\
\alpha _{5}\alpha _{9}-\alpha _{6}\alpha _{8} &=&0, \\
\alpha _{1}\alpha _{6}+\alpha _{2}\alpha _{9}-\alpha _{3}\alpha _{4}-\alpha
_{3}\alpha _{8} &=&0,
\end{eqnarray*}%
for some $\alpha _{1},\ldots ,\alpha _{9}\in \mathbb{C}$. Let $\phi =\bigl(%
a_{ij}\bigr)\in $ $\text{Aut}\left( \mathcal{A}_{02}\right) $. Write%
\begin{equation*}
\theta \ast \phi =\left( 0,\beta _{1}\Delta _{1,3}+\beta _{2}\Delta
_{1,4}+\beta _{3}\Delta _{3,4},\beta _{4}\Delta _{1,3}+\beta _{5}\Delta
_{1,4}+\beta _{6}\Delta _{3,4},\beta _{7}\Delta _{1,3}+\beta _{8}\Delta
_{1,4}+\beta _{9}\Delta _{3,4}\right) .
\end{equation*}%
Then 
\begin{eqnarray*}
\beta _{6} &=&\alpha _{6}a_{44}-\alpha _{9}a_{34}, \\
\beta _{9} &=&\alpha _{9}a_{33}-\alpha _{6}a_{43}.
\end{eqnarray*}%
By Remark \ref{[1 0]}, we may assume $\left( \alpha _{6},\alpha _{9}\right)
\in \left\{ \left( 0,0\right) ,\left( 1,0\right) \right\} $. Assume first
that $\left( \alpha _{6},\alpha _{9}\right) =\left( 0,0\right) $. Then, we
have $\alpha _{3}\left( \alpha _{4}+\alpha _{8}\right) =0$. Moreover, we have%
\begin{eqnarray*}
\beta _{4} &=&\frac{a_{11}}{a_{33}a_{44}-a_{34}a_{43}}\left( \alpha
_{4}a_{33}a_{44}+\alpha _{5}a_{43}a_{44}-\alpha _{7}a_{33}a_{34}-\alpha
_{8}a_{34}a_{43}\right) , \\
\beta _{5} &=&\frac{a_{11}}{a_{33}a_{44}-a_{34}a_{43}}\left( \alpha
_{5}a_{44}^{2}-\alpha _{7}a_{34}^{2}+\alpha _{4}a_{34}a_{44}-\alpha
_{8}a_{34}a_{44}\right) , \\
\beta _{7} &=&-\frac{a_{11}}{a_{33}a_{44}-a_{34}a_{43}}\left( \alpha
_{5}a_{43}^{2}-\alpha _{7}a_{33}^{2}+\alpha _{4}a_{33}a_{43}-\alpha
_{8}a_{33}a_{43}\right) , \\
\beta _{8} &=&-\frac{a_{11}}{a_{33}a_{44}-a_{34}a_{43}}\left( \alpha
_{4}a_{34}a_{43}+\alpha _{5}a_{43}a_{44}-\alpha _{7}a_{33}a_{34}-\alpha
_{8}a_{33}a_{44}\right) .
\end{eqnarray*}%
Whence $%
\begin{pmatrix}
\beta _{4} & \beta _{5} \\ 
\beta _{7} & \beta _{8}%
\end{pmatrix}%
=a_{11}%
\begin{pmatrix}
a_{33} & a_{34} \\ 
a_{43} & a_{44}%
\end{pmatrix}%
^{-1}%
\begin{pmatrix}
\alpha _{4} & \alpha _{5} \\ 
\alpha _{7} & \alpha _{8}%
\end{pmatrix}%
\begin{pmatrix}
a_{33} & a_{34} \\ 
a_{43} & a_{44}%
\end{pmatrix}%
$. Thus we may assume:%
\begin{equation*}
\begin{pmatrix}
\alpha _{4} & \alpha _{5} \\ 
\alpha _{7} & \alpha _{8}%
\end{pmatrix}%
\in \left\{ 
\begin{pmatrix}
0 & 0 \\ 
0 & 0%
\end{pmatrix}%
,%
\begin{pmatrix}
1 & 0 \\ 
0 & \alpha%
\end{pmatrix}%
,%
\begin{pmatrix}
1 & 1 \\ 
0 & 1%
\end{pmatrix}%
,%
\begin{pmatrix}
0 & 1 \\ 
0 & 0%
\end{pmatrix}%
\right\} .
\end{equation*}%
So, the following cases arises:

\begin{itemize}
\item $%
\begin{pmatrix}
\alpha _{4} & \alpha _{5} \\ 
\alpha _{7} & \alpha _{8}%
\end{pmatrix}%
=%
\begin{pmatrix}
0 & 0 \\ 
0 & 0%
\end{pmatrix}%
$.

\begin{itemize}
\item[\ding{118}] $\alpha _{3}=0$. If $\left( \alpha _{1},\alpha _{2}\right) =\left(
0,0\right) $, then $\theta =0$ and we get the algebra $\mathcal{P}_{4,1}$.
Otherwise, let $\phi $ be the first of the following matrices if $\alpha
_{2}=0$ or the second if $\alpha _{2}\neq 0$:%
\begin{equation*}
\begin{pmatrix}
\alpha _{1} & 0 & 0 & 0 \\ 
0 & \alpha _{1}^{2} & 0 & 0 \\ 
0 & 0 & 1 & 0 \\ 
0 & 0 & 0 & 1%
\end{pmatrix}%
,%
\begin{pmatrix}
1 & 0 & 0 & 0 \\ 
0 & 1 & 0 & 0 \\ 
0 & 0 & 0 & 1 \\ 
0 & 0 & \frac{1}{\alpha _{2}} & -\frac{\alpha _{1}}{\alpha _{2}}%
\end{pmatrix}%
.
\end{equation*}%
Then $\theta \ast \phi =\left( 0,\Delta _{1,3},0,0\right) $. So we obtain
the representative $\left( 0,\Delta _{1,3},0,0\right) $. Hence we get the
Poisson algebra $\mathcal{P}_{4,2}$.

\item[\ding{118}] $\alpha _{3}\neq 0$. We define $\phi $ to be the following
automorphism:%
\begin{equation*}
\phi =%
\begin{pmatrix}
1 & 0 & 0 & 0 \\ 
0 & 1 & 0 & 0 \\ 
-\frac{\alpha _{2}}{\alpha _{3}} & 0 & 1 & 0 \\ 
\frac{\alpha _{1}}{\alpha _{3}} & 0 & 0 & \frac{1}{\alpha _{3}}%
\end{pmatrix}%
.
\end{equation*}%
Then $\theta \ast \phi =\left( 0,\Delta _{3,4},0,0\right) $. So we get the
Poisson algebra $\mathcal{P}_{4,3}$.
\end{itemize}

\item $%
\begin{pmatrix}
\alpha _{4} & \alpha _{5} \\ 
\alpha _{7} & \alpha _{8}%
\end{pmatrix}%
=%
\begin{pmatrix}
1 & 0 \\ 
0 & \alpha%
\end{pmatrix}%
$.

\begin{itemize}
\item[\ding{118}] $\alpha _{3}=0$. If $\alpha \neq 0$, we choose $\phi $ as follows:%
\begin{equation*}
\phi =%
\begin{pmatrix}
1 & 0 & 0 & 0 \\ 
0 & 1 & \alpha _{1} & \frac{\alpha _{2}}{\alpha } \\ 
0 & 0 & 1 & 0 \\ 
0 & 0 & 0 & 1%
\end{pmatrix}%
.
\end{equation*}%
Then $\theta \ast \phi =\left( 0,0,\Delta _{1,3},\alpha \Delta _{1,4}\right) 
$. Hence we get the Poisson algebras $\mathcal{P}_{4,4}^{\alpha \neq 0}$.
Furthermore, the Poisson algebras $\mathcal{P}_{4,4}^{\alpha \neq 0}$ and $%
\mathcal{P}_{4,4}^{\beta \neq 0}$ are isomorphic if and only if $\left(
\alpha -\beta \right) \left( \alpha \beta -1\right) =0$. If $\alpha =0$, we
define $\phi $ to be the first of the following matrices if $\alpha _{2}=0$
or the second if $\alpha _{2}\neq 0$: 
\begin{equation*}
\begin{pmatrix}
1 & 0 & 0 & 0 \\ 
0 & 1 & \alpha _{1} & 0 \\ 
0 & 0 & 1 & 0 \\ 
0 & 0 & 0 & 1%
\end{pmatrix}%
,%
\begin{pmatrix}
1 & 0 & 0 & 0 \\ 
0 & 1 & \alpha _{1} & 0 \\ 
0 & 0 & 1 & 0 \\ 
0 & 0 & 0 & \frac{1}{\alpha _{2}}%
\end{pmatrix}%
.
\end{equation*}%
Then $\theta \ast \phi =\left( 0,0,\Delta _{1,3},0\right) $ if $\alpha
_{2}=0 $ or $\theta \ast \phi =\left( 0,\Delta _{1,4},\Delta _{1,3},0\right) 
$ if $\alpha _{2}\neq 0$. So we get the Poisson algebra $\mathcal{P}%
_{4,4}^{\alpha =0}$ if $\alpha _{2}=0$ or the Poisson algebra $\mathcal{P}%
_{4,5}$ if $\alpha _{2}\neq 0$.

\item[\ding{118}] $\alpha _{3}\neq 0$. Since $\alpha _{3}\left( \alpha _{4}+\alpha
_{8}\right) =0$, we have $\alpha =-1$. Choose $\phi $ as follows:%
\begin{equation*}
\phi =\frac{1}{\alpha _{3}}\allowbreak 
\begin{pmatrix}
\alpha _{3} & 0 & 0 & 0 \\ 
0 & \alpha _{3} & 0 & 0 \\ 
-\alpha _{2} & 0 & \alpha _{3} & 0 \\ 
\alpha _{1} & 0 & 0 & 1%
\end{pmatrix}%
.
\end{equation*}%
Then $\theta \ast \phi =\left( 0,\Delta _{3,4},\Delta _{1,3},-\Delta
_{1,4}\right) $. So we get the Poisson algebra $\mathcal{P}_{4,6}$.
\end{itemize}

\item $%
\begin{pmatrix}
\alpha _{4} & \alpha _{5} \\ 
\alpha _{7} & \alpha _{8}%
\end{pmatrix}%
=%
\begin{pmatrix}
1 & 1 \\ 
0 & 1%
\end{pmatrix}%
$.

Since $\alpha _{3}\left( \alpha _{4}+\alpha _{8}\right) =0$, we have $\alpha
_{3}=0$. Let $\phi $ be the following matrix:%
\begin{equation*}
\phi =%
\begin{pmatrix}
1 & 0 & 0 & 0 \\ 
0 & 1 & \alpha _{1} & \alpha _{2}-\alpha _{1} \\ 
0 & 0 & 1 & 0 \\ 
0 & 0 & 0 & 1%
\end{pmatrix}%
.
\end{equation*}%
Then $\theta \ast \phi =\left( 0,0,\Delta _{1,3}+\Delta _{1,4},\Delta
_{1,4}\right) $. Therefore, we get the Poisson algebra $\mathcal{P}_{4,7}$.

\item $%
\begin{pmatrix}
\alpha _{4} & \alpha _{5} \\ 
\alpha _{7} & \alpha _{8}%
\end{pmatrix}%
=%
\begin{pmatrix}
0 & 1 \\ 
0 & 0%
\end{pmatrix}%
$.

\begin{itemize}
\item[\ding{118}] $\alpha _{3}=0$. Let $\phi $ be the first of the following matrices if 
$\alpha _{1}=0$ or the second if $\alpha _{1}\neq 0$:%
\begin{equation*}
\begin{pmatrix}
1 & 0 & 0 & 0 \\ 
0 & 1 & \alpha _{2} & 0 \\ 
0 & 0 & 1 & 0 \\ 
0 & 0 & 0 & 1%
\end{pmatrix}%
,%
\begin{pmatrix}
\alpha _{1} & 0 & 0 & 0 \\ 
0 & \alpha _{1}^{2} & \alpha _{2} & 0 \\ 
0 & 0 & 1 & 0 \\ 
0 & 0 & 0 & \frac{1}{\alpha _{1}}%
\end{pmatrix}%
.
\end{equation*}%
Then $\theta \ast \phi =\left( 0,0,\Delta _{1,4},0\right) $ if $\alpha
_{1}=0 $ or $\theta \ast \phi =\left( 0,\Delta _{1,3},\Delta _{1,4},0\right) 
$ if $\alpha _{1}\neq 0$. Therefore we obtain the Poisson algebras $\mathcal{%
P}_{4,8}$ and $\mathcal{P}_{4,9}$.

\item[\ding{118}] $\alpha _{3}\neq 0$. Choose $\phi $ as follows:%
\begin{equation*}
\phi =%
\begin{pmatrix}
\alpha _{3} & 0 & 0 & 0 \\ 
0 & \alpha _{3}^{2} & \alpha _{2}\alpha _{3} & 0 \\ 
0 & 0 & \alpha _{3} & 0 \\ 
\alpha _{1} & 0 & 0 & 1%
\end{pmatrix}%
.
\end{equation*}%
Then $\theta \ast \phi =\left( 0,\Delta _{3,4},\Delta _{1,4},0\right) $ and
we get the algebra $\mathcal{P}_{4,10}$.
\end{itemize}
\end{itemize}

Let us assume now that $\left( \alpha _{6},\alpha _{9}\right) =\left(
1,0\right) $. Then $\alpha _{7}=\alpha _{8}=0$ and $\alpha _{1}=\alpha
_{3}\alpha _{4}$. Set $\lambda =\alpha _{2}-\alpha _{3}\alpha _{5}$. Let $%
\phi $ be the first of the following matrices if $\lambda =0$ or the second
if $\lambda \neq 0$:%
\begin{equation*}
\begin{pmatrix}
1 & 0 & 0 & 0 \\ 
0 & 1 & \alpha _{3} & 0 \\ 
-\alpha _{5} & 0 & 1 & 0 \\ 
\alpha _{4} & 0 & 0 & 1%
\end{pmatrix}%
,%
\begin{pmatrix}
\lambda & 0 & 0 & 0 \\ 
0 & \lambda ^{2} & \alpha _{3} & 0 \\ 
-\lambda \alpha _{5} & 0 & 1 & 0 \\ 
\lambda \alpha _{4} & 0 & 0 & 1%
\end{pmatrix}%
.
\end{equation*}%
Then $\theta \ast \phi =\left( 0,0,\Delta _{3,4},0\right) $ if $\lambda =0$
or $\theta \ast \phi =\left( 0,\Delta _{1,4},\Delta _{3,4},0\right) $ if $%
\lambda \neq 0$. Thus, we obtain the Poisson algebras $\mathcal{P}_{4,11}$
and $\mathcal{P}_{4,12}$.

\underline{$\left( \mathcal{P},\cdot \right) =\mathcal{A}_{03}$.} Let $%
\theta =\left( B_{1},B_{2},B_{3},B_{4}\right) $ be an arbitrary element of $%
Z^{2}\left( \mathcal{P},\mathcal{P}\right) $. Then%
\begin{eqnarray*}
B_{1} &=&B_{2}=0, \\
B_{3} &=&\alpha _{1}\Delta _{1,2}+\alpha _{2}\Delta _{1,4}+\alpha _{3}\Delta
_{2,4}, \\
B_{4} &=&\alpha _{4}\Delta _{1,2}+\alpha _{5}\Delta _{1,4}+\alpha _{6}\Delta
_{2,4},
\end{eqnarray*}%
such that%
\begin{equation*}
\alpha _{2}\alpha _{6}-\alpha _{3}\alpha _{5}=0,
\end{equation*}%
for some $\alpha _{1},\ldots ,\alpha _{6}\in \mathbb{C}$. Moreover, the
automorphism group of $\mathcal{A}_{03}$, $\text{Aut}\left( \mathcal{A}%
_{03}\right) $, consists of the automorphisms $\phi $ given by a matrix of
the following form:%
\begin{equation*}
\begin{pmatrix}
a_{11} & \varepsilon a_{21} & 0 & 0 \\ 
a_{21} & \epsilon a_{11} & 0 & 0 \\ 
a_{31} & a_{32} & a_{11}^{2}+a_{21}^{2} & a_{34} \\ 
a_{41} & a_{42} & 0 & a_{44}%
\end{pmatrix}%
:\left( \varepsilon ,\epsilon \right) \in \left\{ \left( 1,-1\right) ,\left(
-1,1\right) \right\} .
\end{equation*}%
Let $\phi =\bigl(a_{ij}\bigr)\in $ $\text{Aut}\left( \mathcal{A}_{03}\right) 
$. Then $\theta \ast \phi =\left( 0,0,\beta _{1}\Delta _{1,2}+\beta
_{2}\Delta _{1,4}+\beta _{3}\Delta _{2,4},\beta _{4}\Delta _{1,2}+\beta
_{5}\Delta _{1,4}+\beta _{6}\Delta _{2,4}\right) $ where%
\begin{eqnarray*}
\beta _{1} &=&\frac{1}{a_{44}\left( a_{11}^{2}+a_{21}^{2}\right) }\left( 
\begin{array}{c}
\alpha _{2}a_{11}a_{42}a_{44}+\alpha _{3}a_{21}a_{42}a_{44}-\alpha
_{5}a_{11}a_{42}a_{34}-\alpha _{6}a_{21}a_{42}a_{34}+\epsilon \alpha
_{1}a_{11}^{2}a_{44}-\varepsilon \alpha _{1}a_{21}^{2}a_{44} \\ 
-\epsilon \alpha _{4}a_{11}^{2}a_{34}+\varepsilon \alpha
_{4}a_{21}^{2}a_{34}-\epsilon \alpha _{3}a_{11}a_{41}a_{44}-\varepsilon
\alpha _{2}a_{21}a_{41}a_{44}+\epsilon \alpha
_{6}a_{11}a_{41}a_{34}+\varepsilon \alpha _{5}a_{21}a_{41}a_{34}%
\end{array}%
\right) , \\
\beta _{2} &=&\frac{1}{a_{11}^{2}+a_{21}^{2}}\left( \alpha
_{2}a_{11}a_{44}+\alpha _{3}a_{21}a_{44}-\alpha _{5}a_{11}a_{34}-\alpha
_{6}a_{21}a_{34}\right) , \\
\beta _{3} &=&\frac{1}{a_{11}^{2}+a_{21}^{2}}\left( \epsilon \alpha
_{3}a_{11}a_{44}+\varepsilon \alpha _{2}a_{21}a_{44}-\epsilon \alpha
_{6}a_{11}a_{34}-\varepsilon \alpha _{5}a_{21}a_{34}\right) , \\
\beta _{4} &=&\frac{1}{a_{44}}\left( \epsilon \alpha
_{4}a_{11}^{2}-\varepsilon \alpha _{4}a_{21}^{2}+\alpha
_{5}a_{11}a_{42}+\alpha _{6}a_{21}a_{42}-\epsilon \alpha
_{6}a_{11}a_{41}-\varepsilon \alpha _{5}a_{21}a_{41}\right) , \\
\beta _{5} &=&\alpha _{5}a_{11}+\alpha _{6}a_{21}, \\
\beta _{6} &=&\epsilon \alpha _{6}a_{11}+\varepsilon \alpha _{5}a_{21}.
\end{eqnarray*}%
Assume first that $\alpha _{5}^{2}+\alpha _{6}^{2}\neq 0$. Let $\phi $ be
the following automorphism:%
\begin{equation*}
\phi =\frac{1}{\alpha _{5}^{2}+\alpha _{6}^{2}}\allowbreak 
\begin{pmatrix}
\alpha _{5} & \alpha _{6} & 0 & 0 \\ 
\alpha _{6} & -\alpha _{5} & 0 & 0 \\ 
0 & 0 & 1 & \alpha _{2}\alpha _{5}+\alpha _{3}\alpha _{6} \\ 
0 & \alpha _{4} & 0 & \alpha _{5}^{2}+\alpha _{6}^{2}%
\end{pmatrix}%
.
\end{equation*}%
Then $\theta \ast \phi =\left( 0,0,\alpha \Delta _{1,2},\Delta _{1,4}\right) 
$ for some $\alpha \in \mathbb{C}$. So we get the representatives $\theta
^{\alpha }=\left( 0,0,\alpha \Delta _{1,2},\Delta _{1,4}\right) $. Further,
the representatives $\theta ^{\alpha }$ and $\theta ^{\beta }$ are in the
same orbit if and only if $\alpha ^{2}=\beta ^{2}$. So we get the Poisson
algebras $\mathcal{P}_{4,13}^{\alpha }$. Assume now that $\alpha
_{5}^{2}+\alpha _{6}^{2}=0$ (i.e. $\alpha _{6}=\pm i\alpha _{5}$ where $i=%
\sqrt{-1}$).

\begin{itemize}
\item $\alpha _{5}=0$. Then so is $\alpha _{6}=0$. Assume first that $\alpha
_{2}^{2}+\alpha _{3}^{2}\neq 0$. If $\alpha _{4}=0$, we define $\phi $ to be
the following automorphism: 
\begin{equation*}
\phi =\frac{1}{\alpha _{2}^{2}+\alpha _{3}^{2}}\allowbreak 
\begin{pmatrix}
\alpha _{2} & \alpha _{3} & 0 & 0 \\ 
\alpha _{3} & -\alpha _{2} & 0 & 0 \\ 
0 & 0 & 1 & 0 \\ 
0 & \alpha _{1} & 0 & 1%
\end{pmatrix}%
.
\end{equation*}%
Then $\theta \ast \phi =\left( 0,0,\Delta _{1,4},0\right) $. So we get the
Poisson algebra $\mathcal{P}_{4,14}$. If $\alpha _{4}\neq 0$, we define $%
\phi $ to be the following automorphism: 
\begin{equation*}
\phi =\frac{1}{\alpha _{4}^{2}\left( \alpha _{2}^{2}+\alpha _{3}^{2}\right) }%
\allowbreak 
\begin{pmatrix}
\alpha _{2}\alpha _{4} & -\alpha _{3}\alpha _{4} & 0 & 0 \\ 
\alpha _{3}\alpha _{4} & \alpha _{2}\alpha _{4} & 0 & 0 \\ 
0 & 0 & 1 & 0 \\ 
0 & -\alpha _{1}\alpha _{4} & 0 & \alpha _{4}%
\end{pmatrix}%
.
\end{equation*}%
Then $\theta \ast \phi =\left( 0,0,\Delta _{1,4},\Delta _{1,2}\right) $. So
we get the Poisson algebra $\mathcal{P}_{4,15}$. Assume now that $\alpha
_{3}=\pm i\alpha _{2}$ where $i=\sqrt{-1}$.

\begin{itemize}
\item[\ding{118}] $\alpha _{2}=0$. Then so is $\alpha _{3}=0$. If $\alpha _{4}=0$, then $%
\theta =\left( 0,0,\alpha \Delta _{1,2},0\right) $ for some $\alpha \in 
\mathbb{C}$. So we have the representatives $\vartheta ^{\alpha }=\left(
0,0,\alpha \Delta _{1,2},0\right) $. Moreover, for any $\phi =\bigl(a_{ij}\bigr)%
\in $ $\text{Aut}\left( \mathcal{A}_{03}\right) $, we have $\vartheta
^{\alpha }\ast \phi =\left( 0,0,\beta \Delta _{1,2},0\right) $ with $%
\alpha ^{2}=\beta ^{2}$. Thus the representatives $\vartheta ^{\alpha
},\vartheta ^{\beta }$ are in the same orbit if and only if $\alpha
^{2}=\beta ^{2}$. Hence we get the algebras $\mathcal{P}_{4,16}^{\alpha }$.
If $\alpha _{4}\neq 0$, we choose $\phi $ as follows:%
\begin{equation*}
\phi =%
\begin{pmatrix}
1 & 0 & 0 & 0 \\ 
0 & 1 & 0 & 0 \\ 
0 & 0 & 1 & \alpha _{1} \\ 
0 & 0 & 0 & \alpha _{4}%
\end{pmatrix}%
.
\end{equation*}%
Then $\theta \ast \phi =\left( 0,0,0,\Delta _{1,2}\right) $. So we get the
Poisson algebra $\mathcal{P}_{4,17}$.

\item[\ding{118}] $\alpha _{2}\neq 0$. Then so is $\alpha _{3}\neq 0$. If $\alpha _{4}=0$%
, we choose $\phi $ to be the first of the following matrices when $\alpha
_{3}=i\alpha _{2}$ or the second when $\alpha _{3}=-i\alpha _{2}$:%
\begin{equation*}
\begin{pmatrix}
\alpha _{2} & 0 & 0 & 0 \\ 
0 & \alpha _{2} & 0 & 0 \\ 
0 & 0 & \alpha _{2}^{2} & 0 \\ 
-i\alpha_{1} & 0 & 0 & 1%
\end{pmatrix}%
,%
\begin{pmatrix}
\alpha _{2} & 0 & 0 & 0 \\ 
0 & -\alpha _{2} & 0 & 0 \\ 
0 & 0 & \alpha _{2}^{2} & 0 \\ 
i\alpha_{1} & 0 & 0 & 1%
\end{pmatrix}%
\allowbreak .
\end{equation*}%
Then $\theta \ast \phi =\left( 0,0,\Delta _{1,4}+i\Delta _{2,4},0\right) $.
So we get the Poisson algebra $\mathcal{P}_{4,18}$.

If $\alpha _{4}\neq 0$, we choose $\phi $ to be the first of the following
matrices if $\alpha _{3}=i\alpha _{2}$ or the second if $\alpha
_{3}=-i\alpha _{2}$:%
\begin{equation*}
\frac{1}{\alpha _{2}^{2}\alpha _{4}^{2}}\allowbreak 
\begin{pmatrix}
\alpha _{2}\alpha _{4} & 0 & 0 & 0 \\ 
0 & \alpha _{2}\alpha _{4} & 0 & 0 \\ 
0 & 0 & 1 & \alpha _{1} \\ 
0 & 0 & 0 & \alpha _{4}%
\end{pmatrix}%
,\frac{1}{\alpha _{2}^{2}\alpha _{4}^{2}}\allowbreak 
\begin{pmatrix}
-\alpha _{2}\alpha _{4} & 0 & 0 & 0 \\ 
0 & \alpha _{2}\alpha _{4} & 0 & 0 \\ 
0 & 0 & 1 & -\alpha _{1} \\ 
0 & 0 & 0 & -\alpha _{4}%
\end{pmatrix}%
.\allowbreak \allowbreak
\end{equation*}%
Then $\theta \ast \phi =\left( 0,0,\Delta _{1,4}+i\Delta _{2,4},\Delta
_{1,2}\right) $. So we get the Poisson algebra $\mathcal{P}_{4,19}$.
\end{itemize}

\item $\alpha _{5}\neq 0$. Then so is $\alpha _{6}\neq 0$. Let $\phi $ be
the first of the following matrices when $\alpha _{6}=i\alpha _{5}$ or the
second when $\alpha _{6}=-i\alpha _{5}$:%
\begin{equation*}
\frac{1}{\alpha _{5}^{2}}\allowbreak 
\begin{pmatrix}
\alpha _{5} & 0 & 0 & 0 \\ 
0 & \alpha _{5} & 0 & 0 \\ 
0 & 0 & 1 & \alpha _{2}\alpha _{5} \\ 
0 & -\alpha _{4} & 0 & \alpha _{5}^{2}%
\end{pmatrix}%
,\frac{1}{\alpha _{5}^{2}}\allowbreak 
\begin{pmatrix}
\alpha _{5} & 0 & 0 & 0 \\ 
0 & -\alpha _{5} & 0 & 0 \\ 
0 & 0 & 1 & \alpha _{2}\alpha _{5} \\ 
0 & \alpha _{4} & 0 & \alpha _{5}^{2}%
\end{pmatrix}%
.
\end{equation*}%
Then $\theta \ast \phi =\left( 0,0,\alpha \Delta _{1,2},\Delta
_{1,4}+i\Delta _{2,4}\right) $ for some $\alpha \in \mathbb{C}$. So we have
the representatives $\eta ^{\alpha }=\left( 0,0,\alpha \Delta _{1,2},\Delta
_{1,4}+i\Delta _{2,4}\right) $. Moreover, the representatives $\eta ^{\alpha
},\eta ^{\beta }$ are in the same orbit if and only if $\alpha ^{2}=\beta
^{2}$. So we get the Poisson algebras $\mathcal{P}_{4,20}^{\alpha }$.
\end{itemize}

\underline{$\left( \mathcal{P},\cdot \right) =\mathcal{A}_{04}$.} Let $%
\theta =\left( B_{1},B_{2},B_{3},B_{4}\right) $ be an arbitrary element of $%
Z^{2}\left( \mathcal{P},\mathcal{P}\right) $. Then $\theta =\left(
0,0,\alpha _{1}\Delta _{1,4},\alpha _{2}\Delta _{1,4}\right) $ for some $%
\alpha _{1},\alpha _{2}\in \mathbb{C}$. The automorphism group of $\mathcal{A%
}_{04}$, $\text{Aut}\left( \mathcal{A}_{04}\right) $, consists of the
automorphisms $\phi $ given by a matrix of the following form:%
\begin{equation*}
\begin{pmatrix}
a_{11} & 0 & 0 & 0 \\ 
a_{21} & a_{11}^{2} & 0 & 0 \\ 
a_{31} & 2a_{11}a_{21} & a_{11}^{3} & a_{34} \\ 
a_{41} & 0 & 0 & a_{44}%
\end{pmatrix}%
.
\end{equation*}%
Let $\phi =\bigl(a_{ij}\bigr)\in $ $\text{Aut}\left( \mathcal{A}_{04}\right) 
$. Then $\theta \ast \phi =\left( 0,0,\beta _{1}\Delta _{1,4},\beta
_{2}\Delta _{1,4}\right) $ where%
\begin{eqnarray*}
\beta _{1} &=&\frac{1}{a_{11}^{2}}\left( \alpha _{1}a_{44}-\alpha
_{2}a_{34}\right) , \\
\beta _{2} &=&\alpha _{2}a_{11}.
\end{eqnarray*}%
If $\theta =0$, we get the Poisson
algebra $\mathcal{P}_{4,21}$. Assume now that $\theta \neq0$. If $\alpha
_{2}\neq 0$, we choose $\phi $ as follows:%
\begin{equation*}
\phi =\frac{1}{\alpha _{2}^{3}}%
\begin{pmatrix}
\alpha _{2}^{2} & 0 & 0 & 0 \\ 
0 & \alpha _{2} & 0 & 0 \\ 
0 & 0 & 1 & \alpha _{1}\alpha _{2}^{2} \\ 
0 & 0 & 0 & \alpha _{2}^{3}%
\end{pmatrix}%
.
\end{equation*}%
Then $\theta \ast \phi =\left( 0,0,0,\Delta _{1,4}\right) $. Hence we get
the Poisson algebra $\mathcal{P}_{4,22}$. If $\alpha _{2}=0$, we choose $%
\phi $ as follows:%
\begin{equation*}
\phi =%
\begin{pmatrix}
1 & 0 & 0 & 0 \\ 
0 & 1 & 0 & 0 \\ 
0 & 0 & 1 & 0 \\ 
0 & 0 & 0 & \frac{1}{\alpha _{1}}%
\end{pmatrix}%
.
\end{equation*}%
Then $\theta \ast \phi =\left( 0,0,\Delta _{1,4},0\right) $. So we get the
Poisson algebra $\mathcal{P}_{4,23}$.

\underline{$\left( \mathcal{P},\cdot \right) =\mathcal{A}_{05}$.} It is easy
to see that the algebra $\mathcal{A}_{05}$ is isomorphic to the following
algebra%
\begin{equation*}
\mathcal{A}_{05}^{\prime }:e_{1}^{2}=e_{3},e_{2}^{2}=e_{4}.
\end{equation*}%
So we may assume $\left( \mathcal{P},\cdot \right) =\mathcal{A}_{05}^{\prime
}$. Let $\theta =\left( B_{1},B_{2},B_{3},B_{4}\right) $ be an arbitrary
element of $Z^{2}\left( \mathcal{P},\mathcal{P}\right) $. Then $\theta
=\left( 0,0,\alpha _{1}\Delta _{1,2},\alpha _{2}\Delta _{1,2}\right) $ for
some $\alpha _{1},\alpha _{2}\in \mathbb{C}$. The automorphism group of $%
\mathcal{A}_{05}^{\prime }$, $\text{Aut}\left( \mathcal{A}_{05}^{\prime
}\right) $, consists of the automorphisms $\phi $ given by a matrix of the
following form:%
\begin{equation*}
\begin{pmatrix}
\epsilon a_{11} & \left( 1-\epsilon \right) a_{12} & 0 & 0 \\ 
\left( 1-\epsilon \right) a_{21} & \epsilon a_{22} & 0 & 0 \\ 
a_{31} & a_{32} & \epsilon a_{11}^{2} & \left( 1-\epsilon \right) a_{12}^{2}
\\ 
a_{41} & a_{42} & \left( 1-\epsilon \right) a_{21}^{2} & \epsilon a_{22}^{2}%
\end{pmatrix}%
:\epsilon \in \left\{ 0,1\right\} .
\end{equation*}%
Let $\phi =\bigl(a_{ij}\bigr)\in $ $\text{Aut}\left( \mathcal{A}%
_{05}^{\prime }\right) $. Then $\theta \ast \phi =\left( 0,0,\beta
_{1}\Delta _{1,2},\beta _{2}\Delta _{1,2}\right) $ where $\beta _{1}=\frac{%
a_{22}}{a_{11}}\alpha _{1},\beta _{2}=\alpha _{2}\frac{a_{11}}{a_{22}}$ if $%
\epsilon =1$ or $\beta _{1}=-\alpha _{2}\frac{a_{12}}{a_{21}},\beta _{2}=-%
\frac{\alpha _{1}}{a_{12}}a_{21}$ if $\epsilon =0$. If $\theta =0$, we get
the Poisson algebra $\mathcal{P}_{4,24}$. If $\theta \neq 0$, then we may
assume without any loss of generality that $\alpha _{1}\neq 0$. Let $\phi $
be the following automorphism:%
\begin{equation*}
\phi =%
\begin{pmatrix}
\alpha _{1} & 0 & 0 & 0 \\ 
0 & 1 & 0 & 0 \\ 
0 & 0 & \alpha _{1}^{2} & 0 \\ 
0 & 0 & 0 & 1%
\end{pmatrix}%
.
\end{equation*}%
Then $\theta \ast \phi= \left( 0,0,\Delta
_{1,2},\alpha \Delta _{1,2}\right) $ for some $\alpha \in \mathbb{C}$. Hence we get the representatives $\theta ^{\alpha }=\left( 0,0,\Delta
_{1,2},\alpha \Delta _{1,2}\right) $. Moreover, $\theta ^{\alpha }$ and $%
\theta ^{\beta }$ are in the same orbit if and only if $\alpha =\beta $.
Hence we get the Poisson algebras $\mathcal{P}_{4,25}^{\alpha }$.

\underline{$\left( \mathcal{P},\cdot \right) =\mathcal{A}_{06}$.} Let $%
\theta =\left( B_{1},B_{2},B_{3},B_{4}\right) $ be an arbitrary element of $%
Z^{2}\left( \mathcal{P},\mathcal{P}\right) $. Then 
\begin{equation*}
\theta =(\alpha _{1}\Delta _{1,2},0,\alpha _{2}\Delta _{1,2},\alpha
_{3}\Delta _{1,2}+2\alpha _{1}\Delta _{1,3}-\alpha _{1}\Delta _{2,4})
\end{equation*}%
for some $\alpha _{1},\alpha _{2},\alpha _{3}\in \mathbb{C}$. The
automorphism group of $\mathcal{A}_{06}$, $\text{Aut}\left( \mathcal{A}%
_{06}\right) $, consists of the automorphisms $\phi $ given by a matrix of
the following form:%
\begin{equation*}
\begin{pmatrix}
a_{11} & a_{12} & 0 & 0 \\ 
0 & a_{22} & 0 & 0 \\ 
a_{31} & a_{32} & a_{22}^{2} & 0 \\ 
a_{41} & a_{42} & 2a_{12}a_{22} & a_{11}a_{22}%
\end{pmatrix}%
.
\end{equation*}%
Let $\phi =\bigl(a_{ij}\bigr)\in $Aut$\left( \mathcal{A}_{06}\right) $. Then 
$\theta \ast \phi =(\beta _{1}\Delta _{1,2},0,\beta _{2}\Delta _{1,2},\beta
_{3}\Delta _{1,2}+2\beta _{1}\Delta _{1,3}-\beta _{1}\Delta _{2,4})$ where%
\begin{eqnarray*}
\beta _{1} &=&\alpha _{1}a_{22}, \\
\beta _{2} &=&\frac{1}{a_{22}}\left( \alpha _{2}a_{11}-\alpha
_{1}a_{31}\right) , \\
\beta _{3} &=&\frac{1}{a_{22}}\left( 2\alpha _{1}a_{32}-2\alpha
_{2}a_{12}+\alpha _{3}a_{22}\right) .
\end{eqnarray*}%
Let us consider the following cases:$\allowbreak $

\begin{itemize}
\item $\alpha _{1}\neq 0$. Let $\phi $ be the following automorphism:%
\begin{equation*}
\phi =%
\begin{pmatrix}
1 & 0 & 0 & 0 \\ 
0 & \frac{1}{\alpha _{1}} & 0 & 0 \\ 
\frac{\alpha _{2}}{\alpha _{1}} & -\frac{1}{2\alpha _{1}^{2}}\alpha _{3} & 
\frac{1}{\alpha _{1}^{2}} & 0 \\ 
0 & 0 & 0 & \frac{1}{\alpha _{1}}%
\end{pmatrix}%
.
\end{equation*}%
Then $\theta \ast \phi =(\Delta _{1,2},0,0,2\Delta _{1,3}-\Delta _{2,4})$.
So we obtain the Poisson algebra $\mathcal{P}_{4,26}$.$\allowbreak $

\item $\alpha _{1}=0,\alpha _{2}\neq 0$. Let $\phi $ be the following
automorphism:%
\begin{equation*}
\phi =%
\begin{pmatrix}
1 & \frac{1}{2}\alpha _{3} & 0 & 0 \\ 
0 & \alpha _{2} & 0 & 0 \\ 
0 & 0 & \alpha _{2}^{2} & 0 \\ 
0 & 0 & \alpha _{2}\alpha _{3} & \alpha _{2}%
\end{pmatrix}%
.
\end{equation*}%
Then $\theta \ast \phi =(0,0,\Delta _{1,2},0)$. So we obtain the Poisson
algebra $\mathcal{P}_{4,27}$.$\allowbreak $

\item $\alpha _{1}=\alpha _{2}=0$. Then $\theta \ast \phi =\theta $ for any $%
\phi \in $Aut$\left( \mathcal{A}_{06}\right) $. Thus we have the
representatives $\theta ^{\alpha }=\left( 0,0,0,\alpha \Delta _{1,2}\right) $
and $\theta ^{\alpha },$ $\theta ^{\beta }$ are in the same orbit if and
only if $\alpha =\beta $. Therefore, we obtain the Poisson algebras $%
\mathcal{P}_{4,28}^{\alpha }$.
\end{itemize}

\underline{$\left( \mathcal{P},\cdot \right) =\mathcal{A}_{07}$.} Since $%
\mathcal{A}_{07}$ is isomorphic to $\mathcal{A}_{07}^{\prime }:e_{1}\cdot
e_{2}=e_{4},e_{3}^{2}=e_{4}$, we may assume $\left( \mathcal{P},\cdot
\right) =\mathcal{A}_{07}^{\prime }$. Let $\theta =\left(
B_{1},B_{2},B_{3},B_{4}\right) $ be an arbitrary element of $Z^{2}\left( 
\mathcal{P},\mathcal{P}\right) $. Then%
\begin{equation*}
\theta =(-\alpha _{1}\Delta _{1,3},\alpha _{1}\Delta _{2,3},\alpha
_{1}\Delta _{1,2},\alpha _{2}\Delta _{1,2}+\alpha _{3}\Delta _{1,3}+\alpha
_{4}\Delta _{2,3})
\end{equation*}%
for some $\alpha _{1},\ldots ,\alpha _{4}\in \mathbb{C}$. The automorphism
group of $\mathcal{A}_{07}^{\prime }$, $\text{Aut}\left( \mathcal{A}%
_{07}^{\prime }\right) $, consists of the automorphisms $\phi $ given by a
matrix of the following form:%
\begin{equation*}
\phi =%
\begin{pmatrix}
a_{11} & a_{12} & a_{13} & 0 \\ 
a_{21} & a_{22} & a_{23} & 0 \\ 
a_{31} & a_{32} & a_{33} & 0 \\ 
a_{41} & a_{42} & a_{43} & a_{44}%
\end{pmatrix}%
\end{equation*}%
such that%
\begin{equation*}
\begin{pmatrix}
a_{23} & a_{13} & a_{33} \\ 
a_{22} & a_{12} & a_{32} \\ 
a_{21} & a_{11} & a_{31}%
\end{pmatrix}%
\begin{pmatrix}
a_{11} & a_{12} & a_{13} \\ 
a_{21} & a_{22} & a_{23} \\ 
a_{31} & a_{32} & a_{33}%
\end{pmatrix}%
=%
\begin{pmatrix}
0 & 0 & a_{44} \\ 
a_{44} & 0 & 0 \\ 
0 & a_{44} & 0%
\end{pmatrix}%
.
\end{equation*}%
Let us consider the following cases:

\begin{itemize}
\item $\alpha _{1}\neq 0$. Let us define $\phi $ as follows:%
\begin{equation*}
\phi =\frac{1}{\alpha _{1}^{2}}%
\begin{pmatrix}
\alpha _{1} & 0 & 0 & 0 \\ 
0 & \alpha _{1} & 0 & 0 \\ 
0 & 0 & \alpha _{1} & 0 \\ 
-\alpha _{3} & \alpha _{4} & \alpha _{2} & 1%
\end{pmatrix}%
.
\end{equation*}%
Then $\theta \ast \phi =\left( -\Delta _{1,3},\Delta _{2,3},\Delta
_{1,2},0\right) $. So we get the Poisson algebra $\mathcal{P}_{4,29}$.

\item $\alpha _{1}=0,\alpha _{3}\neq 0$. Set $\lambda =\alpha
_{2}^{2}-2\alpha _{3}\alpha _{4}$. Let $\phi $ be the first of the following
matrices if $\lambda =0$ or the second if $\lambda \neq 0$:%
\begin{equation*}
\begin{pmatrix}
\frac{1}{\alpha _{3}} & -\alpha _{4} & \frac{\alpha _{2}}{\alpha _{3}} & 0
\\ 
0 & \alpha _{3} & 0 & 0 \\ 
0 & -\alpha _{2} & 1 & 0 \\ 
0 & 0 & 0 & 1%
\end{pmatrix}%
,%
\begin{pmatrix}
-\frac{1}{\alpha _{3}^{2}}\left( \alpha _{2}^{2}-\alpha _{3}\alpha _{4}+%
\sqrt{\lambda }\alpha _{2}\right) & -\frac{1}{2\lambda }\left( \alpha
_{3}\alpha _{4}-\alpha _{2}^{2}+\sqrt{\lambda }\alpha _{2}\right) & \frac{1}{%
\sqrt{\lambda }}\alpha _{4} & 0 \\ 
1 & -\frac{1}{2\lambda }\alpha _{3}^{2} & -\frac{1}{\sqrt{\lambda }}\alpha
_{3} & 0 \\ 
-\frac{1}{\alpha _{3}}\left( \alpha _{2}+\sqrt{\lambda }\right) & -\frac{1}{%
2\lambda ^{\frac{3}{2}}}\alpha _{3}\left( \lambda -\sqrt{\lambda }\alpha
_{2}\right) & \frac{1}{\sqrt{\lambda }}\alpha _{2} & 0 \\ 
0 & 0 & 0 & 1%
\end{pmatrix}%
.
\end{equation*}%
Then $\theta \ast \phi =\left( 0,0,0,\Delta _{1,3}\right) $ if $\lambda =0$
or $\theta \ast \phi =\left( 0,0,0,\alpha \Delta _{1,2}\right) $ with $%
\alpha \neq 0$ if $\lambda \neq 0$. Thus we obtain the Poisson algebras $%
\mathcal{P}_{4,30}$ and $\mathcal{P}_{4,31}^{\alpha \neq 0}$. Moreover, the
algebras $\mathcal{P}_{4,31}^{\alpha \neq 0}$ and $\mathcal{P}%
_{4,31}^{\alpha \neq 0}$ are isomorphic if and only if $\alpha ^{2}=\beta
^{2}$.

\item $\alpha _{1}=0,\alpha _{3}=0$.

\begin{itemize}
\item[\ding{118}] $\alpha _{2}\neq 0$. If we choose $\phi $ as follows:%
\begin{equation*}
\phi =%
\begin{pmatrix}
1 & -\frac{1}{2\alpha _{2}^{2}}\alpha _{4}^{2} & \frac{1}{\alpha _{2}}\alpha
_{4} & 0 \\ 
0 & 1 & 0 & 0 \\ 
0 & -\frac{1}{\alpha _{2}}\alpha _{4} & 1 & 0 \\ 
0 & 0 & 0 & 1%
\end{pmatrix}%
,
\end{equation*}%
then $\theta \ast \phi =\left( 0,0,0,\alpha _{2}\Delta _{1,2}\right) $ and
so we get again the Poisson algebras $\mathcal{P}_{4,31}^{\alpha \neq 0}$.

\item[\ding{118}] $\alpha _{2}=0$. If $\alpha _{4}=0$, then $\theta =0$ and we get the
algebra $\mathcal{P}_{4,31}^{\alpha =0}$. If $\alpha _{4}\neq 0$, we choose $%
\phi $ to be the following automorphism: 
\begin{equation*}
\phi =%
\begin{pmatrix}
0 & -\alpha _{4} & 0 & 0 \\ 
-\frac{1}{\alpha _{4}} & 0 & 0 & 0 \\ 
0 & 0 & -1 & 0 \\ 
0 & 0 & 0 & 1%
\end{pmatrix}%
.
\end{equation*}%
Then $\theta \ast \phi =\left( 0,0,0,\Delta _{1,3}\right) $ and we have
again the algebra $\mathcal{P}_{4,30}$.
\end{itemize}
\end{itemize}

\underline{$\left( \mathcal{P},\cdot \right) =\mathcal{A}_{08}$.} Let $%
\theta =\left( B_{1},B_{2},B_{3},B_{4}\right) $ be an arbitrary element of $%
Z^{2}\left( \mathcal{P},\mathcal{P}\right) $. Then $\theta =\left(
0,0,0,\alpha \Delta _{1,3}\right) $ for some $\alpha \in \mathbb{C}$. If $%
\alpha =0$, we then get the algebra $\mathcal{P}_{4,32}$. If $\alpha \neq 0$%
, we define $\phi $ to be the following diagonal matrix:%
\begin{equation*}
\phi =%
\begin{pmatrix}
\alpha ^{2} & 0 & 0 & 0 \\ 
0 & \alpha ^{4} & 0 & 0 \\ 
0 & 0 & \alpha ^{3} & 0 \\ 
0 & 0 & 0 & \alpha ^{6}%
\end{pmatrix}%
.
\end{equation*}%
Then $\phi \in \text{Aut}\left( \mathcal{A}_{08}\right) $ and $\theta \ast
\phi =\left( 0,0,0,\Delta _{1,3}\right) $. Hence we get the Poisson algebra $%
\mathcal{P}_{4,33}$.

\underline{$\left( \mathcal{P},\cdot \right) =\mathcal{A}_{09}$.} Then $%
Z^{2}\left( \mathcal{P},\mathcal{P}\right) =\left\{ 0\right\} $. So we get
the algebra $\mathcal{P}_{4,34}$.

\underline{$\left( \mathcal{P},\cdot \right) =\mathcal{A}_{10}$.} Then $%
Z^{2}\left( \mathcal{P},\mathcal{P}\right) =\left\{ 0\right\} $. So we get
the algebra $\mathcal{P}_{4,35}$.

\underline{$\left( \mathcal{P},\cdot \right) =\mathcal{A}_{11}$.} Then $%
Z^{2}\left( \mathcal{P},\mathcal{P}\right) =\left\{ 0\right\} $. So we get
the algebra $\mathcal{P}_{4,36}$.

\underline{$\left( \mathcal{P},\cdot \right) =\mathcal{A}_{12}$.} Then $%
Z^{2}\left( \mathcal{P},\mathcal{P}\right) =\left\{ 0\right\} $. So we get
the algebra $\mathcal{P}_{4,37}$.

\underline{$\left( \mathcal{P},\cdot \right) =\mathcal{A}_{13}$.} Let $%
\theta =\left( B_{1},B_{2},B_{3},B_{4}\right) $ be an arbitrary element of $%
Z^{2}\left( \mathcal{P},\mathcal{P}\right) $. Then $\theta =\left(
0,0,\alpha _{1}\Delta _{3,4},\alpha _{2}\Delta _{3,4}\right) $ for some $%
\alpha _{1},\alpha _{2}\in \mathbb{C}$. The automorphism group of $\mathcal{A%
}_{13}$, $\text{Aut}\left( \mathcal{A}_{13}\right) $, consists of the
automorphisms $\phi $ given by a matrix of the following form:%
\begin{equation*}
\begin{pmatrix}
1 & 0 & 0 & 0 \\ 
0 & 1 & 0 & 0 \\ 
0 & 0 & a_{33} & a_{34} \\ 
0 & 0 & a_{43} & a_{44}%
\end{pmatrix}%
.
\end{equation*}%
Let $\phi =\bigl(a_{ij}\bigr)\in $ $\text{Aut}\left( \mathcal{A}_{13}\right) 
$. Then $\theta \ast \phi =\left( 0,0,\beta _{1}\Delta _{3,4},\beta
_{2}\Delta _{3,4}\right) $ where%
\begin{eqnarray*}
\beta _{1} &=&\alpha _{1}a_{44}-\alpha _{2}a_{34}, \\
\beta _{2} &=&\alpha _{2}a_{33}-\alpha _{1}a_{43}.
\end{eqnarray*}%
$\allowbreak $Then, by Remark \ref{[1 0]}, we may assume $\left( \alpha
_{1},\alpha _{2}\right) \in \left\{ \left( 0,0\right) ,\left( 1,0\right)
\right\} $. So we get the algebras $\mathcal{P}_{4,38}$ and $\mathcal{P}%
_{4,39}$.

\underline{$\left( \mathcal{P},\cdot \right) =\mathcal{A}_{14}$.} Then $%
Z^{2}\left( \mathcal{P},\mathcal{P}\right) =\left\{ 0\right\} $. So we get
the algebra $\mathcal{P}_{4,40}$.

\underline{$\left( \mathcal{P},\cdot \right) =\mathcal{A}_{15}$.} The
automorphism group of $\mathcal{A}_{15}$, $\text{Aut}\left( \mathcal{A}%
_{15}\right) $, consists of the automorphisms $\phi $ given by a matrix of
the following form:%
\begin{equation*}
\begin{pmatrix}
1 & 0 & 0 & 0 \\ 
0 & a_{22} & a_{23} & a_{24} \\ 
0 & a_{32} & a_{33} & a_{34} \\ 
0 & a_{42} & a_{43} & a_{44}%
\end{pmatrix}%
.
\end{equation*}%
Let $\theta =\left( B_{1},B_{2},B_{3},B_{4}\right) $ be an arbitrary element
of $Z^{2}\left( \mathcal{P},\mathcal{P}\right) $. Then%
\begin{eqnarray*}
B_{1} &=&0, \\
B_{2} &=&\alpha _{1}\Delta _{2,3}+\alpha _{2}\Delta _{2,4}+\alpha _{3}\Delta
_{3,4}, \\
B_{3} &=&\alpha _{4}\Delta _{2,3}+\alpha _{5}\Delta _{2,4}+\alpha _{6}\Delta
_{3,4}, \\
B_{4} &=&\alpha _{7}\Delta _{2,3}+\alpha _{8}\Delta _{2,4}+\alpha _{9}\Delta
_{3,4},
\end{eqnarray*}%
such that%
\begin{eqnarray*}
\alpha _{1}\alpha _{8}-\alpha _{2}\alpha _{7}+\alpha _{4}\alpha _{9}-\alpha
_{6}\alpha _{7} &=&0, \\
\alpha _{1}\alpha _{6}+\alpha _{2}\alpha _{9}-\alpha _{3}\alpha _{4}-\alpha
_{3}\alpha _{8} &=&0, \\
\alpha _{1}\alpha _{5}-\alpha _{2}\alpha _{4}-\alpha _{5}\alpha _{9}+\alpha
_{6}\alpha _{8} &=&0,
\end{eqnarray*}%
for some $\alpha _{1},\ldots ,\alpha _{9}\in \mathbb{C}$. Let $\phi =\bigl(%
a_{ij}\bigr)\in $ $\text{Aut}\left( \mathcal{A}_{15}\right) $. Write%
\begin{equation*}
\theta \ast \phi =\left( 0,\beta _{1}\Delta _{2,3}+\beta _{2}\Delta
_{2,4}+\beta _{3}\Delta _{3,4},\beta _{4}\Delta _{2,3}+\beta _{5}\Delta
_{2,4}+\beta _{6}\Delta _{3,4},\beta _{7}\Delta _{2,3}+\beta _{8}\Delta
_{2,4}+\beta _{9}\Delta _{3,4}\right) .
\end{equation*}%
Now if we define $\phi $ to be the following matrix:%
\begin{equation*}
\phi =%
\begin{pmatrix}
1 & 0 & 0 & 0 \\ 
0 & a_{22} & a_{23} & a_{24} \\ 
0 & 0 & a_{33} & a_{34} \\ 
0 & 0 & a_{43} & a_{44}%
\end{pmatrix}%
,
\end{equation*}%
then $%
\begin{pmatrix}
\beta _{4} & \beta _{5} \\ 
\beta _{7} & \beta _{8}%
\end{pmatrix}%
=a_{22}%
\begin{pmatrix}
a_{33} & a_{34} \\ 
a_{43} & a_{44}%
\end{pmatrix}%
^{-1}%
\begin{pmatrix}
\alpha _{4} & \alpha _{5} \\ 
\alpha _{7} & \alpha _{8}%
\end{pmatrix}%
\begin{pmatrix}
a_{33} & a_{34} \\ 
a_{43} & a_{44}%
\end{pmatrix}%
$. So we may assume that%
\begin{equation*}
\begin{pmatrix}
\alpha _{4} & \alpha _{5} \\ 
\alpha _{7} & \alpha _{8}%
\end{pmatrix}%
\in \left\{ 
\begin{pmatrix}
0 & 0 \\ 
0 & 0%
\end{pmatrix}%
,%
\begin{pmatrix}
1 & 0 \\ 
0 & \alpha%
\end{pmatrix}%
,%
\begin{pmatrix}
1 & 1 \\ 
0 & 1%
\end{pmatrix}%
,%
\begin{pmatrix}
0 & 1 \\ 
0 & 0%
\end{pmatrix}%
\right\} .
\end{equation*}%
Then we have the following cases:

\begin{itemize}
\item $%
\begin{pmatrix}
\alpha _{4} & \alpha _{5} \\ 
\alpha _{7} & \alpha _{8}%
\end{pmatrix}%
=%
\begin{pmatrix}
0 & 0 \\ 
0 & 0%
\end{pmatrix}%
$. Again, if we define $\phi $ to be the following matrix:%
\begin{equation*}
\phi =%
\begin{pmatrix}
1 & 0 & 0 & 0 \\ 
0 & a_{22} & a_{23} & a_{24} \\ 
0 & 0 & a_{33} & a_{34} \\ 
0 & 0 & a_{43} & a_{44}%
\end{pmatrix}%
,
\end{equation*}%
then%
\begin{eqnarray*}
\beta _{6} &=&\alpha _{6}a_{44}-\alpha _{9}a_{34}, \\
\beta _{9} &=&\alpha _{9}a_{33}-\alpha _{6}a_{43}.
\end{eqnarray*}%
So, by Remark \ref{[1 0]}, we may assume $\left( \alpha _{6},\alpha
_{9}\right) \in \left\{ \left( 1,0\right) ,\left( 0,0\right) \right\} $.

\begin{itemize}
\item[\ding{118}] $\left( \alpha _{6},\alpha _{9}\right) =\left( 1,0\right) $. Then $%
\alpha _{1}=0$ since otherwise $\theta \notin Z^{2}\left( \mathcal{P},%
\mathcal{P}\right) $. If $\alpha _{2}\neq 1$, we define $\phi $ to be the
following matrix:%
\begin{equation*}
\begin{pmatrix}
1 & 0 & 0 & 0 \\ 
0 & 1 & \frac{\alpha _{3}}{1-\alpha _{2}} & 0 \\ 
0 & 0 & 1 & 0 \\ 
0 & 0 & 0 & 1%
\end{pmatrix}%
.
\end{equation*}%
Then $\theta \ast \phi=\left( 0,\alpha_{2}
\Delta _{2,4},\Delta _{3,4},0\right)$. So we have the representatives $\theta ^{\alpha \neq 1}=\left( 0,\alpha
\Delta _{2,4},\Delta _{3,4},0\right) $. If $\alpha _{2}=1$ and $\alpha
_{3}=0 $, we obtain the representative $\theta ^{\alpha =1}=\left( 0,\Delta
_{2,4},\Delta _{3,4},0\right) $. Morover, the representatives $\theta
^{\alpha },\theta ^{\beta }$ are in the same orbit if and only if $\left(
\alpha -\beta \right) \left( \alpha \beta -1\right) =0$. So we get the
Poisson algebras $\mathcal{P}_{4,41}^{\alpha }$. If $\alpha _{2}=1$ and $%
\alpha _{3}\neq 0$, we define $\phi $ to be the following matrix:%
\begin{equation*}
\begin{pmatrix}
1 & 0 & 0 & 0 \\ 
0 & \alpha _{3} & 0 & 0 \\ 
0 & 0 & 1 & 0 \\ 
0 & 0 & 0 & 1%
\end{pmatrix}%
.
\end{equation*}%
Then $\theta \ast \phi =$ $\left( 0,\Delta _{2,4}+\Delta _{3,4},\Delta
_{3,4},0\right) $. So we get the Poisson algebra $\mathcal{P}_{4,42}$.

\item[\ding{118}] $\left( \alpha _{6},\alpha _{9}\right) =\left( 0,0\right) $. If $%
\alpha _{1}=\alpha _{2}=\alpha _{3}=0$, we get the algebra $\mathcal{P}_{4,43}$%
. Assume now that $\left( \alpha _{1},\alpha _{2},\alpha _{3}\right) \neq
\left( 0,0,0\right) $. If $\left( \alpha _{1},\alpha _{2}\right) \neq \left(
0,0\right) $, we define $\phi $ to be the first of the following matrices if 
$\alpha _{1}\neq 0$ or the second if $\alpha _{1}=0$:%
\begin{equation*}
\phi =%
\begin{pmatrix}
1 & 0 & 0 & 0 \\ 
0 & \frac{1}{\alpha _{1}}\alpha _{3} & \alpha _{1} & 0 \\ 
0 & -\frac{1}{\alpha _{1}}\alpha _{2} & 0 & \frac{1}{\alpha _{1}} \\ 
0 & 1 & 0 & 0%
\end{pmatrix}%
,%
\begin{pmatrix}
1 & 0 & 0 & 0 \\ 
0 & \alpha _{3} & 1 & 0 \\ 
0 & -\alpha _{2} & 0 & 0 \\ 
0 & 0 & 0 & \frac{1}{\alpha _{2}}%
\end{pmatrix}%
.
\end{equation*}%
Then $\theta \ast \phi =\left( 0,0,\Delta _{3,4},0\right) $and we get the
Poisson algebra $\mathcal{P}_{4,41}^{\alpha =0}$. If $\left( \alpha
_{1},\alpha _{2}\right) =\left( 0,0\right) $, we define $\phi $ to be the
following matrix:%
\begin{equation*}
\begin{pmatrix}
1 & 0 & 0 & 0 \\ 
0 & \alpha _{3} & 0 & 0 \\ 
0 & 0 & 1 & 0 \\ 
0 & 0 & 0 & 1%
\end{pmatrix}%
.
\end{equation*}%
Then $\theta \ast \phi =\left( 0,\Delta _{3,4},0,0\right) $. So we get the
Poisson algebra $\mathcal{P}_{4,44}$.
\end{itemize}

\item $%
\begin{pmatrix}
\alpha _{4} & \alpha _{5} \\ 
\alpha _{7} & \alpha _{8}%
\end{pmatrix}%
=%
\begin{pmatrix}
1 & 0 \\ 
0 & \alpha%
\end{pmatrix}%
$. Then
\begin{eqnarray*}
\alpha _{1}\alpha +\alpha _{9} &=&0, \\
\alpha _{1}\alpha _{6}+\alpha _{2}\alpha _{9}-\alpha _{3}-\alpha
_{3}\alpha  &=&0, \\
\alpha _{2}-\alpha
_{6}\alpha &=&0.
\end{eqnarray*}%
Assume first that $\alpha \neq 0$. Set $\lambda =-\frac{1}{\alpha }\left(
\alpha _{2}\alpha _{9}-\alpha\alpha _{3} +\alpha \alpha_{1}\alpha_{6}\right) $. Then $\lambda \left( \alpha +1\right) =0$. If $\lambda =0$,
we choose $\phi $ as follows:%
\begin{equation*}
\begin{pmatrix}
1 & 0 & 0 & 0 \\ 
0 & \alpha _{6} & -\frac{1}{\alpha }\alpha _{9} & -1 \\ 
0 & 0 & 1 & 0 \\ 
0 & 1 & 0 & 0%
\end{pmatrix}%
.
\end{equation*}
Then $\theta \ast \phi =\left( 0,\alpha \Delta _{2,4},\Delta _{3,4},0\right) 
$ . So we obtain the Poisson algebras $\mathcal{P}_{4,41}^{\alpha }$. If $%
\lambda \neq 0$, then $\alpha =-1$. Further if we choose $\phi $ as follows:%
\begin{equation*}
\begin{pmatrix}
1 & 0 & 0 & 0 \\ 
0 & 1 & \alpha _{9} & \frac{1}{\lambda }\alpha _{6} \\ 
0 & 0 & 1 & 0 \\ 
0 & 0 & 0 & \frac{1}{\lambda }%
\end{pmatrix}%
,
\end{equation*}
then $\theta \ast \phi =\left( 0,\Delta _{3,4},\Delta _{2,3},-\Delta
_{2,4}\right) $. Hence we get the Poisson algebras $\mathcal{P}_{4,45}$.
Assume now that $\alpha =0$. Then $\alpha _{2}=\alpha _{9}=0$ and $\alpha
_{3}=\alpha _{1}\alpha _{6}$. Moreover, if we choose $\phi $ as follows:%
\begin{equation*}
\phi =%
\begin{pmatrix}
1 & 0 & 0 & 0 \\ 
0 & \alpha _{6} & \alpha _{1} & -1 \\ 
0 & 0 & 1 & 0 \\ 
0 & 1 & 0 & 0%
\end{pmatrix}%
,
\end{equation*}%
then $\theta \ast \phi =\left( 0,0,\Delta _{3,4},0\right) $. Hence we get
the Poisson algebras $\mathcal{P}_{4,41}^{\alpha =0}$.

\item $%
\begin{pmatrix}
\alpha _{4} & \alpha _{5} \\ 
\alpha _{7} & \alpha _{8}%
\end{pmatrix}%
=%
\begin{pmatrix}
1 & 1 \\ 
0 & 1%
\end{pmatrix}%
$. Then $\alpha_{1}+\alpha_{9}=0,\alpha_{3}+\alpha_{9}^2=0$ and $\alpha_{2}-\alpha_{6}+2\alpha_{9}=0$. Choose $\phi $ as follows:%
\begin{equation*}
\phi =%
\begin{pmatrix}
1 & 0 & 0 & 0 \\ 
0 & -i\alpha _{9} & i\alpha _{6}-\left( 1+i\right) \alpha _{9} & \alpha_{6}-\alpha_{9}-1 \\ 
0 & i & 1 & 0 \\ 
0 & 0 & i & 0%
\end{pmatrix}%
:i=\sqrt{-1}.
\end{equation*}%
Then $\theta \ast \phi =\left( 0,\Delta _{2,4}+\Delta _{3,4},\Delta
_{3,4},0\right) $. So we get the algebra $\mathcal{P}_{4,42}$.

\item $%
\begin{pmatrix}
\alpha _{4} & \alpha _{5} \\ 
\alpha _{7} & \alpha _{8}%
\end{pmatrix}%
=%
\begin{pmatrix}
0 & 1 \\ 
0 & 0%
\end{pmatrix}%
$. Then $\alpha _{1}=\alpha _{9}$ and $\alpha _{9}\left( \alpha _{2}+\alpha
_{6}\right) =0$. \ Let us consider the following cases:

\begin{itemize}
\item[\ding{118}] $\alpha _{9}=\alpha _{2}+\alpha _{6}=0$. Let $\phi $ be the first of
the following matrices if $\alpha _{3}+\alpha _{6}^2\neq 0$ or the
second if $\alpha _{3}+\alpha _{6}^2=0$:%
\begin{equation*}
\begin{pmatrix}
1 & 0 & 0 & 0 \\ 
0 & -\alpha _{6}-\sqrt{\alpha _{3}+\alpha _{6}^2} & \sqrt{\alpha
_{3}+\alpha _{6}^2}-\alpha _{6} & 0 \\ 
0 & 1 & 1 & 0 \\ 
0 & 0 & 0 & \frac{1}{\sqrt{\alpha _{3}+\alpha _{6}^2}}%
\end{pmatrix}%
\allowbreak ,%
\begin{pmatrix}
1 & 0 & 0 & 0 \\ 
0 & -\alpha _{6} & 1 & 0 \\ 
0 & 1 & 0 & 0 \\ 
0 & 0 & 0 & 1%
\end{pmatrix}%
.
\end{equation*}%
Then $\theta \ast \phi =\left( 0,-\Delta _{2,4},\Delta _{3,4},0\right) $ if $%
\alpha _{3}+\alpha _{6}^2\neq 0$ or $\theta \ast \phi =\left(
0,\Delta _{3,4},0,0\right) $ if $\alpha _{3}+\alpha _{6}^2=0$. So
we get the algebras $\mathcal{P}_{41}^{\alpha =-1}$ and $\mathcal{P}_{44}$.

\item[\ding{118}] $\alpha _{9}\neq 0,\alpha _{2}+\alpha _{6}=0$. Let $\phi $ be the
following automorphism: 
\begin{equation*}
\begin{pmatrix}
1 & 0 & 0 & 0 \\ 
0 & \frac{\alpha _{6}}{\alpha _{9}} & -\frac{1}{\alpha _{9}} & \frac{1}{2}%
\frac{\alpha _{6}^{2}}{\alpha _{9}}+\frac{1}{2}\frac{\alpha _{3}}{\alpha _{9}%
} \\ 
0 & -\frac{1}{\alpha _{9}} & 0 & 0 \\ 
0 & 0 & 0 & 1%
\end{pmatrix}%
.
\end{equation*}%
Then $\theta \ast \phi =\left( 0,\Delta _{3,4},\Delta _{2,3},-\Delta
_{2,4}\right) $. So we get the algebra $\mathcal{P}_{4,45}$.

\item[\ding{118}] $\alpha _{9}=0,\alpha _{2}+\alpha _{6}\neq 0$. Consider the following
automorphism:%
\begin{equation*}
\phi =%
\begin{pmatrix}
1 & 0 & 0 & 0 \\ 
0 & \alpha _{2}+\alpha _{6} & -\alpha _{6} & 0 \\ 
0 & 0 & 1 & 0 \\ 
0 & 0 & 0 & \frac{1}{\alpha _{2}+\alpha _{6}}%
\end{pmatrix}%
.
\end{equation*}%
Then $\theta \ast \phi =\left( 0,\Delta _{2,4}+\beta _{3}\Delta
_{3,4},\Delta _{2,4},0\right) $ with $\beta _{3}=\frac{\alpha _{3}-\alpha
_{2}\alpha _{6}}{\left( \alpha _{2}+\alpha _{6}\right) ^{2}}$.

\begin{itemize}
\item[\ding{169}]  $\beta _{3}=0$. Then $\theta \ast \phi \phi ^{\prime }=\left(
0,0,\Delta _{3,4},0\right) $ \ where%
\begin{equation*}
\phi ^{\prime }=%
\begin{pmatrix}
1 & 0 & 0 & 0 \\ 
0 & 0 & 1 & 0 \\ 
0 & 1 & 1 & 0 \\ 
0 & 0 & 0 & 1%
\end{pmatrix}%
.
\end{equation*}%
So we get the algebra $\mathcal{P}_{4,41}^{\alpha =}0$.

\item[\ding{169}] $\beta _{3}=-\frac{1}{4}$. Then $\theta \ast \phi \phi ^{\prime
}=\left( 0,\Delta _{2,4}+\Delta _{3,4},\Delta _{3,4},0\right) $ \ where%
\begin{equation*}
\phi ^{\prime }=%
\begin{pmatrix}
1 & 0 & 0 & 0 \\ 
0 & -\frac{1}{2} & 0 & 0 \\ 
0 & -1 & 1 & 0 \\ 
0 & 0 & 0 & 2%
\end{pmatrix}%
.
\end{equation*}%
So we get the algebra $\mathcal{P}_{4,42}$.

\item[\ding{169}] $\beta _{3}\neq 0,-\frac{1}{4}$. Then $\theta \ast \phi \phi ^{\prime
}=\left( 0,\beta \Delta _{2,4},\Delta _{3,4},0\right) $ for some $\beta \in 
%TCIMACRO{\U{2102} }%
%BeginExpansion
\mathbb{C}
%EndExpansion
$ where%
\begin{equation*}
\phi ^{\prime }=%
\begin{pmatrix}
1 & 0 & 0 & 0 \\ 
0 & 1 & \frac{1}{2}\left( \frac{\beta _{3}+4\beta _{3}^{2}+\beta _{3}\sqrt{%
1+4\beta _{3}}}{1+4\beta _{3}}\right) & 0 \\ 
0 & \frac{-1-\sqrt{1+4\beta _{3}}}{2\beta _{3}} & \frac{\beta _{3}}{\sqrt{%
1+4\beta _{3}}} & 0 \\ 
0 & 0 & 0 & \frac{1}{2}\left( \frac{-1+\sqrt{1+4\beta _{3}}}{\beta _{3}}%
\right)%
\end{pmatrix}%
.
\end{equation*}%
So we get the algebra $\mathcal{P}_{4,41}^{\alpha =\beta }$.
\end{itemize}
\end{itemize}
\end{itemize}

\underline{$\left( \mathcal{P},\cdot \right) =\mathcal{A}_{16}$.} Let $\theta =\left( B_{1},B_{2},B_{3},B_{4}\right) $ be an arbitrary element
of $Z^{2}\left( \mathcal{P},\mathcal{P}\right) $. Then $\theta =\left(
0,0,\alpha _{1}\Delta _{2,4},\alpha _{2}\Delta _{2,4}\right) $ for some $%
\alpha _{1},\alpha _{2}\in \mathbb{C}$. The automorphism group of $\mathcal{A%
}_{16}$, $\text{Aut}\left( \mathcal{A}_{16}\right) $, consists of the
automorphisms $\phi $ given by a matrix of the following form:%
\begin{equation*}
\begin{pmatrix}
1 & 0 & 0 & 0 \\ 
0 & a_{22} & 0 & 0 \\ 
0 & a_{32} & a_{22}^{2} & a_{34} \\ 
0 & a_{42} & 0 & a_{44}%
\end{pmatrix}%
.
\end{equation*}%
Let $\phi =\bigl(a_{ij}\bigr)\in $ $\text{Aut}\left( \mathcal{A}_{16}\right) 
$. Then $\theta \ast \phi =\left( 0,0,\beta _{1}\Delta _{2,4},\beta
_{2}\Delta _{2,4}\right) $ where%
\begin{eqnarray*}
\beta _{1} &=&\frac{1}{a_{22}}\left( \alpha _{1}a_{44}-\alpha
_{2}a_{34}\right) , \\
\beta _{2} &=&\alpha _{2}a_{22}.
\end{eqnarray*}%
If $\theta =0$, we get the algebra $\mathcal{P}_{4,46}$. Otherwise, let $\phi $\ be the first of
the following matrices if $\alpha _{2}\neq 0$ or the second if $\alpha _{2}=0
$:%
\begin{equation*}
\begin{pmatrix}
1 & 0 & 0 & 0 \\ 
0 & \frac{1}{\alpha _{2}} & 0 & 0 \\ 
0 & 0 & \frac{1}{\alpha _{2}^{2}} & \frac{\alpha _{1}}{\alpha _{2}} \\ 
0 & 0 & 0 & 1%
\end{pmatrix}%
,%
\begin{pmatrix}
1 & 0 & 0 & 0 \\ 
0 & 1 & 0 & 0 \\ 
0 & 0 & 1 & 0 \\ 
0 & 0 & 0 & \frac{1}{\alpha _{1}}%
\end{pmatrix}%
.
\end{equation*}%
Then $\theta \ast \phi =\left( 0,0,0,\Delta _{2,4}\right) $ if $\alpha
_{2}\neq 0$ while $\theta \ast \phi =\left( 0,0,\Delta _{2,4},0\right) $ if $%
\alpha _{2}=0$. So we get the algebras $\mathcal{P}_{4,47}$ and $\mathcal{P}_{4,48}$.

\underline{$\left( \mathcal{P},\cdot \right) =\mathcal{A}_{17}$.} Then $%
Z^{2}\left( \mathcal{P},\mathcal{P}\right) =\left\{ 0\right\} $. So we get
the algebra $\mathcal{P}_{4,49}$.

\underline{$\left( \mathcal{P},\cdot \right) =\mathcal{A}_{18}$.} Let $%
\theta =\left( B_{1},B_{2},B_{3},B_{4}\right) $ be an arbitrary element of $%
Z^{2}\left( \mathcal{P},\mathcal{P}\right) $. Then $\theta =\left(
0,0,0,\alpha \Delta _{2,3}\right) $ for some $\alpha \in \mathbb{C}$. The
automorphism group of $\mathcal{A}_{18}$, $\text{Aut}\left( \mathcal{A}%
_{18}\right) $, consists of the automorphisms $\phi $ given by a matrix of
the following form:%
\begin{equation*}
\begin{pmatrix}
1 & 0 & 0 & 0 \\ 
0 & a_{22} & -\epsilon a_{32} & 0 \\ 
0 & a_{32} & \epsilon a_{22} & 0 \\ 
0 & a_{42} & a_{43} & a_{22}^{2}+a_{32}^{2}%
\end{pmatrix}%
:\epsilon ^{2}=1.
\end{equation*}%
Let $\phi =\bigl(a_{ij}\bigr)\in $ $\text{Aut}\left( \mathcal{A}_{18}\right) 
$. Then $\theta \ast \phi =\left( 0,0,0,\beta \Delta _{2,3}\right) $ where $%
\beta =\epsilon \alpha $. So we get the representatives $\theta ^{\alpha
}=\left( 0,0,0,\alpha \Delta _{2,3}\right) $. Moreover, the representatives $%
\theta ^{\alpha }$ and $\theta ^{\beta }$ are in the same orbit if and only
if $\alpha ^{2}=\beta ^{2}$. Hence we get the algebras $\mathcal{P}%
_{4,50}^{\alpha }$.

\underline{$\left( \mathcal{P},\cdot \right) =\mathcal{A}_{19}$.} Then $%
Z^{2}\left( \mathcal{P},\mathcal{P}\right) =\left\{ 0\right\} $. So we get
the algebra $\mathcal{P}_{4,51}$.

\underline{$\left( \mathcal{P},\cdot \right) =\mathcal{A}_{20}$.} Then $%
Z^{2}\left( \mathcal{P},\mathcal{P}\right) =\left\{ 0\right\} $. So we get
the algebra $\mathcal{P}_{4,52}$.

\underline{$\left( \mathcal{P},\cdot \right) =\mathcal{A}_{21}$.} Let $%
\theta =\left( B_{1},B_{2},B_{3},B_{4}\right) $ be an arbitrary element of $%
Z^{2}\left( \mathcal{P},\mathcal{P}\right) $. Then $\theta =\left( 0,\alpha
_{1}\Delta _{2,3},\alpha _{2}\Delta _{2,3},0\right) $ for some $\alpha
_{1},\alpha _{2}\in \mathbb{C}$. The automorphism group of $\mathcal{A}_{21}$%
, $\text{Aut}\left( \mathcal{A}_{21}\right) $, consists of the automorphisms 
$\phi $ given by a matrix of the following form:%
\begin{equation*}
\begin{pmatrix}
1 & 0 & 0 & 0 \\ 
0 & a_{22} & a_{23} & 0 \\ 
0 & a_{32} & a_{33} & 0 \\ 
0 & 0 & 0 & a_{44}%
\end{pmatrix}%
.
\end{equation*}%
Let $\phi =\bigl(a_{ij}\bigr)\in $ $\text{Aut}\left( \mathcal{A}_{21}\right) 
$. Then $\theta \ast \phi =\left( 0,\beta _{1}\Delta _{2,3},\beta _{2}\Delta
_{2,3},0\right) $ where%
\begin{eqnarray*}
\beta _{1} &=&\alpha _{1}a_{33}-\alpha _{2}a_{23}, \\
\beta _{2} &=&\alpha _{2}a_{22}-\alpha _{1}a_{32}.
\end{eqnarray*}%
$\allowbreak $Then, by Remark \ref{[1 0]}, we may assume $\left( \alpha
_{1},\alpha _{2}\right) \in \left\{ \left( 0,0\right) ,\left( 1,0\right)
\right\} $. So we get the algebras $\mathcal{P}_{4,53}$ and $\mathcal{P}%
_{4,54}$.

\underline{$\left( \mathcal{P},\cdot \right) =\mathcal{A}_{22}$.} Then $%
Z^{2}\left( \mathcal{P},\mathcal{P}\right) =\left\{ 0\right\} $. So we get
the algebra $\mathcal{P}_{4,55}$.

$\allowbreak $\underline{$\left( \mathcal{P},\cdot \right) =\mathcal{A}_{23}$%
.} Let $\theta =\left( B_{1},B_{2},B_{3},B_{4}\right) $ be an arbitrary
element of $Z^{2}\left( \mathcal{P},\mathcal{P}\right) $. Then $\theta
=\left( 0,0,\alpha _{1}\Delta _{3,4},\alpha _{2}\Delta _{3,4}\right) $ for
some $\alpha _{1},\alpha _{2}\in \mathbb{C}$. The automorphism group of $%
\mathcal{A}_{23}$, $\text{Aut}\left( \mathcal{A}_{23}\right) $, consists of
the automorphisms $\phi $ given by a matrix of the following form:%
\begin{equation*}
\begin{pmatrix}
\epsilon & 1-\epsilon & 0 & 0 \\ 
1-\epsilon & \epsilon & 0 & 0 \\ 
0 & 0 & a_{33} & a_{34} \\ 
0 & 0 & a_{43} & a_{44}%
\end{pmatrix}%
:\epsilon \in \left\{ 0,1\right\} .
\end{equation*}%
Let $\phi =\bigl(a_{ij}\bigr)\in $ $\text{Aut}\left( \mathcal{A}_{23}\right) 
$. Then $\theta \ast \phi =\left( 0,0,\beta _{1}\Delta _{3,4},\beta
_{2}\Delta _{3,4}\right) $ where%
\begin{eqnarray*}
\beta _{1} &=&\alpha _{1}a_{44}-\alpha _{2}a_{34}, \\
\beta _{2} &=&\alpha _{2}a_{33}-\alpha _{1}a_{43}.
\end{eqnarray*}%
$\allowbreak $Then, by Remark \ref{[1 0]}, we may assume $\left( \alpha
_{1},\alpha _{2}\right) \in \left\{ \left( 0,0\right) ,\left( 1,0\right)
\right\} $. So we get the algebras $\mathcal{P}_{4,56}$ and $\mathcal{P}%
_{4,57}$.

\underline{$\left( \mathcal{P},\cdot \right) =\mathcal{A}_{24}$.} Then $%
Z^{2}\left( \mathcal{P},\mathcal{P}\right) =\left\{ 0\right\} $. So we get
the algebra $\mathcal{P}_{4,58}$.

\underline{$\left( \mathcal{P},\cdot \right) =\mathcal{A}_{25}$.} Let $%
\theta =\left( B_{1},B_{2},B_{3},B_{4}\right) $ be an arbitrary element of $%
Z^{2}\left( \mathcal{P},\mathcal{P}\right) $. Then $\theta =\left(
0,0,\alpha _{1}\Delta _{3,4},\alpha _{2}\Delta _{3,4}\right) $ for some $%
\alpha _{1},\alpha _{2}\in \mathbb{C}$. The automorphism group of $\mathcal{A%
}_{25}$, $\text{Aut}\left( \mathcal{A}_{25}\right) $, consists of the
automorphisms $\phi $ given by a matrix of the following form:%
\begin{equation*}
\begin{pmatrix}
1 & 0 & 0 & 0 \\ 
0 & a_{22} & 0 & 0 \\ 
0 & 0 & a_{33} & a_{34} \\ 
0 & 0 & a_{43} & a_{44}%
\end{pmatrix}%
.
\end{equation*}%
Let $\phi =\bigl(a_{ij}\bigr)\in $ $\text{Aut}\left( \mathcal{A}_{25}\right) 
$. Then $\theta \ast \phi =\left( 0,0,\beta _{1}\Delta _{3,4},\beta
_{2}\Delta _{3,4}\right) $ where%
\begin{eqnarray*}
\beta _{1} &=&\alpha _{1}a_{44}-\alpha _{2}a_{34}, \\
\beta _{2} &=&\alpha _{2}a_{33}-\alpha _{1}a_{43}.
\end{eqnarray*}%
$\allowbreak $Then, by Remark \ref{[1 0]}, we may assume $\left( \alpha
_{1},\alpha _{2}\right) \in \left\{ \left( 0,0\right) ,\left( 1,0\right)
\right\} $. So we get the algebras $\mathcal{P}_{4,59}$ and $\mathcal{P}%
_{4,60}$.

\underline{$\left( \mathcal{P},\cdot \right) =\mathcal{A}_{26}$.} Then $%
Z^{2}\left( \mathcal{P},\mathcal{P}\right) =\left\{ 0\right\} $. So we get
the algebra $\mathcal{P}_{4,61}$.

\underline{$\left( \mathcal{P},\cdot \right) =\mathcal{A}_{27}$.} The
automorphism group of $\mathcal{A}_{27}$, $\text{Aut}\left( \mathcal{A}%
_{27}\right) $, consists of the automorphisms $\phi $ given by a matrix of
the following form:%
\begin{equation*}
\begin{pmatrix}
1 & 0 & 0 & 0 \\ 
0 & a_{22} & a_{23} & a_{24} \\ 
0 & a_{32} & a_{33} & a_{34} \\ 
0 & a_{42} & a_{43} & a_{44}%
\end{pmatrix}%
.
\end{equation*}%
Let $\theta =\left( B_{1},B_{2},B_{3},B_{4}\right) $ be an arbitrary element
of $Z^{2}\left( \mathcal{P},\mathcal{P}\right) $. Then%
\begin{eqnarray*}
B_{1} &=&0, \\
B_{2} &=&\alpha _{1}\Delta _{2,3}+\alpha _{2}\Delta _{2,4}+\alpha _{3}\Delta
_{3,4}, \\
B_{3} &=&\alpha _{4}\Delta _{2,3}+\alpha _{5}\Delta _{2,4}+\alpha _{6}\Delta
_{3,4}, \\
B_{4} &=&\alpha _{7}\Delta _{2,3}+\alpha _{8}\Delta _{2,4}+\alpha _{9}\Delta
_{3,4},
\end{eqnarray*}%
such that%
\begin{eqnarray*}
\alpha _{1}\alpha _{8}-\alpha _{2}\alpha _{7}+\alpha _{4}\alpha _{9}-\alpha
_{6}\alpha _{7} &=&0, \\
\alpha _{1}\alpha _{6}+\alpha _{2}\alpha _{9}-\alpha _{3}\alpha _{4}-\alpha
_{3}\alpha _{8} &=&0, \\
\alpha _{1}\alpha _{5}-\alpha _{2}\alpha _{4}-\alpha _{5}\alpha _{9}+\alpha
_{6}\alpha _{8} &=&0,
\end{eqnarray*}%
for some $\alpha _{1},\ldots ,\alpha _{9}\in \mathbb{C}$. Since Aut$%
\left( \mathcal{A}_{15}\right) =$Aut$\left( \mathcal{A}_{27}\right) $ and $%
Z^{2}\left( \mathcal{A}_{15},\mathcal{A}_{15}\right) =Z^{2}\left( \mathcal{A}%
_{27},\mathcal{A}_{27}\right) $, we obtain the algebras $\mathcal{P}%
_{4,62}^{\alpha },$ $\mathcal{P}_{4,63},$ $\mathcal{P}_{4,64},\mathcal{P}%
_{4,65},\mathcal{P}_{4,66}$.

\underline{$\left( \mathcal{P},\cdot \right) =\mathcal{A}_{28}$.} Let $%
\theta =\left( B_{1},B_{2},B_{3},B_{4}\right) $ be an arbitrary element of $%
Z^{2}\left( \mathcal{P},\mathcal{P}\right) $. Then $\theta =\left(
0,0,0,\alpha \Delta _{2,3}\right) $ for some $\alpha \in \mathbb{C}$. The
automorphism group of $\mathcal{A}_{28}$, $\text{Aut}\left( \mathcal{A}%
_{28}\right) $, consists of the automorphisms $\phi $ given by a matrix of
the following form:%
\begin{equation*}
\begin{pmatrix}
1 & 0 & 0 & 0 \\ 
0 & \epsilon a_{22} & \left( 1-\epsilon \right) a_{23} & 0 \\ 
0 & \left( 1-\epsilon \right) a_{32} & \epsilon a_{33} & 0 \\ 
0 & a_{42} & a_{43} & \epsilon a_{22}a_{33}+\left( 1-\epsilon \right)
a_{23}a_{32}%
\end{pmatrix}%
:\epsilon \in \left\{ 0,1\right\} .
\end{equation*}%
Let $\phi =\bigl(a_{ij}\bigr)\in $ $\text{Aut}\left( \mathcal{A}_{28}\right) 
$. Then $\theta \ast \phi =\left( 0,0,0,\beta \Delta _{2,3}\right) $ where%
\begin{equation*}
\beta =\frac{\alpha }{a_{23}a_{32}+\epsilon a_{22}a_{33}-\epsilon
a_{23}a_{32}}\left( \epsilon a_{22}a_{33}-a_{23}a_{32}+\epsilon
a_{23}a_{32}\right)
\end{equation*}%
Whence $\beta ^{2}=\alpha ^{2}$. Hence we get the algebras $\mathcal{P}%
_{4,67}^{\alpha }$. Moreover, the algebras $\mathcal{P}_{4,67}^{\alpha }$
and $\mathcal{P}_{4,67}^{\beta }$ are isomorphic if and only if $\alpha
^{2}=\beta ^{2}$.

\underline{$\left( \mathcal{P},\cdot \right) =\mathcal{A}_{29}$.} Let $%
\theta =\left( B_{1},B_{2},B_{3},B_{4}\right) $ be an arbitrary element of $%
Z^{2}\left( \mathcal{P},\mathcal{P}\right) $. Then $\theta =\left(
0,0,\alpha _{1}\Delta _{2,4},\alpha _{2}\Delta _{2,4}\right) $ for some $%
\alpha _{1},\alpha _{2}\in \mathbb{C}$. The automorphism group of $\mathcal{A%
}_{29}$, $\text{Aut}\left( \mathcal{A}_{29}\right) $, consists of the
automorphisms $\phi $ given by a matrix of the following form:%
\begin{equation*}
\begin{pmatrix}
1 & 0 & 0 & 0 \\ 
0 & a_{22} & 0 & 0 \\ 
0 & a_{32} & a_{22}^{2} & a_{34} \\ 
0 & a_{42} & 0 & a_{44}%
\end{pmatrix}%
.
\end{equation*}%
Let $\phi =\bigl(a_{ij}\bigr)\in $ $\text{Aut}\left( \mathcal{A}_{29}\right) 
$. Then $\theta \ast \phi =\left( 0,0,\beta _{1}\Delta _{2,4},\beta
_{2}\Delta _{2,4}\right) $ where%
\begin{eqnarray*}
\beta _{1} &=&\frac{1}{a_{22}}\left( \alpha _{1}a_{44}-\alpha
_{2}a_{34}\right) , \\
\beta _{2} &=&\alpha _{2}a_{22}.
\end{eqnarray*}%
If $\alpha _{2}\neq 0$, we choose $\phi $ as follows:%
\begin{equation*}
\begin{pmatrix}
1 & 0 & 0 & 0 \\ 
0 & \frac{1}{\alpha _{2}} & 0 & 0 \\ 
0 & 0 & \frac{1}{\alpha _{2}^{2}} & \frac{\alpha _{1}}{\alpha _{2}} \\ 
0 & 0 & 0 & 1%
\end{pmatrix}%
.
\end{equation*}%
Then $\theta \ast \phi =\left( 0,0,0,\Delta _{2,4}\right) $. Hence we get
the algebra $\mathcal{P}_{4,68}$. If $\alpha _{2}=0$ and $\alpha _{1}\neq 0$%
, we choose $\phi $ as follows:%
\begin{equation*}
\begin{pmatrix}
1 & 0 & 0 & 0 \\ 
0 & \alpha _{1} & 0 & 0 \\ 
0 & 0 & \alpha _{1}^{2} & 0 \\ 
0 & 0 & 0 & 1%
\end{pmatrix}%
.
\end{equation*}%
Then $\theta \ast \phi =\left( 0,0,\Delta _{2,4},0\right) $. So we get the
algebra $\mathcal{P}_{4,69}$. If $\alpha _{1}=\alpha _{2}=0$, then $\theta
=0 $ and we get the algebra $\mathcal{P}_{4,70}$.

\underline{$\left( \mathcal{P},\cdot \right) =\mathcal{A}_{30}$.} Then $%
Z^{2}\left( \mathcal{P},\mathcal{P}\right) =\left\{ 0\right\} $. So we get
the algebra $\mathcal{P}_{4,71}$.

\bigskip
\bigskip
\bigskip

\noindent {\bf Conflicts of Interest:} The authors declare no conflict of interest.

\bigskip

\noindent {\bf Data Availability Statement:} No new data were created or analyzed in this study. Data sharing is not applicable to this article.

\bigskip
\bigskip
\bigskip

\end{document}